\theoremstyle{plain}
\newtheorem{theorem}{Theorem}
\newtheorem*{proposition*}{Proposition}
\newtheorem*{corollary*}{Corollary}
\newtheorem{lemma}[theorem]{Lemma}
\newtheorem*{theorem*}{Theorem}
\newtheorem*{lemma*}{Lemma}
\newtheorem*{conjecture*}{Conjecture}
\newtheorem{conjecture}[theorem]{Conjecture}
\newtheorem*{question*}{Question}
\newtheorem*{problem*}{Problem}
\theoremstyle{definition}
\newtheorem{definition}[theorem]{Definition}
\newtheorem{example}[theorem]{Example}
\newtheorem*{exercise*}{Exercise}
\theoremstyle{remark}
\newtheorem{remark}[theorem]{Remark}
\newtheorem*{remark*}{Remark}
\newtheorem{remsTh}[theorem]{Remarks}
\newcommand{\subclass}[1]{}
\newcommand{\enumTi}[1]{\renewcommand{\theenumi}{#1}}
\newcommand{\alphenumi}{\enumTi{\alph{enumi}}}
\newcommand{\romenumi}{\enumTi{\roman{enumi}}}
\newcommand{\lt}{\left}
\newcommand{\rt}{\right}
\newcommand{\sabs}[1]{{\lvert{#1}\rvert}}
\newcommand{\widebar}[1]{\overline{#1}}
\newcommand{\nfrac}[2]{{\nicefrac{#1}{#2}}}
\newcommand{\RR}{\mathbb{R}}
\newcommand{\eps}{\varepsilon}
\newlength{\algotabbingwidth}
\renewcommand{\paragraph}[1]{\par\vspace{1ex}\noindent #1}
\numberwithin{theorem}{section}
\newcommand{\myparagraph}[1]{\paragraph{#1}}
\begin{document}
\setcounter{tocdepth}{3}

%


\title[Good edge-labelings]{Good edge-labelings and graphs with girth at least five}
%
%
\author{Michel Bode}%

\author{Babak Farzad}%
\address{Babak Farzad: Mathematics Department\\ Brock University\\St.~Catharines, Canada}
\email{bfarzad@brocku.ca}
\thanks{This research has been partially funded by an NSERC Discovery Grant.  Part of the work on this project was done while DOT was a Visiting International
  Scholar at Brock University.}

\author{Dirk Oliver Theis}
\address{Michel Bode \& Dirk Oliver Theis:
  Fakult\"at f\"ur Mathematik\\
  Otto-von-Guericke-Universit\"at Magdeburg\\
  Universit\"atsplatz~2\\
  39106~Magdeburg\\
  Germany}%
\email{
theis@ovgu.de {\tiny\href{http://dirkolivertheis.wordpress.com}{http://dirkolivertheis.wordpress.com}}}%
\thanks{For funding his visit to CanaDAM 2011, MB thanks the Faculty of Mathematics of the Otto von Guericke Univeristy Magdeburg and CanaDAM student funding.}%


\subjclass[2000]{Primary 05C78}





\begin{abstract}
  A good edge-labeling of a graph [Ara{\'u}jo, Cohen, Giroire, Havet, Discrete Appl.~Math., forthcoming] is an assignment of numbers to the edges such that
  for no pair of vertices, there exist two non-decreasing paths.
  In this paper, we study edge-labeling on graphs with girth at least~5.  In particular we verify, under this additional hypothesis, a conjecture by Ara{\'u}jo
  et al.  This conjecture states that if the average degree of $G$ is less than~$3$ and $G$ is minimal without an edge-labeling, then $G \in \{C_3,K_{2,3}\}$.
  (For the case when the girth is~4, we give a counterexample.)
\end{abstract}


\date{Fri Jul 27 14:10:27 CEST 2012}
\maketitle





\section{Introduction}

All graphs are finite and simple.  We refer to Diestel~\cite{Diestel-GTIII} for most of our graph theory terminology.

A \textit{good edge-labeling}~\cite{BermondCosnardPerennes09} of a graph $G$ is a labeling of its edges $\phi\colon E(G) \to \RR$ such that, for any ordered pair of
vertices $u$ and $v$, there is at most one nondecreasing path from $u$ to~$v$.  We will mostly use the following characterization of a good edge-labeling, which
involves cycles instead of pairs of paths:
\begin{equation*}
  \text{\it An edge-labeling is good, if, and only if, every cycle has at least two local minima.}
\end{equation*}
Here, by a local minimum we mean an edge~$e$ whose label is strictly less than the labels of the two edges incident to~$e$ on the cycle (this differs from the
definition in the next section because at this point, unlike later in the paper, for convenience, we assume that all labels are distinct).

Good edge-labelings have first been studied by Bermond, Cosnard, and P\'erennes~\cite{BermondCosnardPerennes09} in the context of so-called Wavelength Division
Multiplexing problems~\cite{BermondCosnardCoudertPerennes06}.  There, given a network, the so-called Routing and Wavelength Assignment Problem asks for finding
routes and associated wavelengths, such that a set of traffic requests is satisfied, while minimizing the number of used wavelengths.

Araujo, Cohen, Giroire, and Havet~\cite{AraujoCohenGiroireHavet09,AraujoCohenGiroireHavet11} have studied good edge-labelings in more depth.  They call a graph with
no good edge-labeling \textit{bad}, and say that a \textit{critical} graph is a minimal bad graph, that is, every proper subgraph has a good edge-labeling.  It is
easy to see that $C_3$ and $K_{2,3}$ are critical.  Araujo et al.'s~\cite{AraujoCohenGiroireHavet11} paper comprises an infinite family of critical graphs; results
that graphs in some classes always have a good edge-labelings (planar graphs with girth at least 6, $(C_3,K_{2,3})$-free outerplanar graphs, $(C_3,K_{2,3})$-free
sub-cubic graphs); the algorithmic complexity of recognizing bad graphs; and a connection to matching-cuts.  (A \textit{matching-cut}, aka ``simple
cut''~\cite{Graham70}, is a set of independent edges which is an edge-cut.)

In fact, all their arguments for proving non-criticality rely on the existence of matching-cuts.  One of the central contributions of our paper is that we move
beyond using matching-cuts.

Araujo et al.\ also pose a number of problems and conjectures.  In particular, they have the following conjecture, which is one of the two motivations behind our
paper.
\begin{conjecture}[Araujo et al.~\cite{AraujoCohenGiroireHavet11}]\label{conj:main}
  There is no critical graph with average degree less than~3, with the exception of~$C_3$ and~$K_{2,3}$.
\end{conjecture}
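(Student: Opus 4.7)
My plan is to attack the conjecture by contradiction. Suppose $G$ is a critical graph, distinct from $C_3$ and $K_{2,3}$, with $2\abs{E(G)} < 3\abs{V(G)}$. I would first collect the standard structural constraints forced by criticality: minimum degree at least $2$ (a pendant edge can be labeled last), 2-edge-connectivity, and the absence of a matching-cut, since good labelings on each side of such a cut — guaranteed by criticality — can be pasted after a vertical translation of one side's labels to block any nondecreasing path across the cut. This is the main technique of Araújo et al., and it is known to be insufficient on its own, so part of the novelty I would need is a pasting argument across more general small cuts (e.g., two edges sharing an endpoint, or the two edges of a 2-cut sitting on a short cycle), so that I can remove and reattach a small subgraph while preserving a good labeling.

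Second, I would run a discharging argument with initial charge $d(v)-3$ at each vertex. The hypothesis makes the total charge strictly negative, so after any conservative redistribution at least one vertex must end up with positive charge, trapped inside a small forbidden local configuration. Chains of degree-2 vertices contract to single edges without changing the labeling problem, so the configurations of interest are short cycles joined by such chains. The case analysis would then aim to show that every forbidden local configuration either forces $G$ to coincide with $C_3$ or $K_{2,3}$, or else admits the pasting step from the previous paragraph to extend a good labeling from a proper subgraph, contradicting badness of $G$.

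\textbf{The main obstacle.} The hardest case is girth~$4$, and in fact this is where any proof of the conjecture as literally worded must break down: the abstract itself flags a counterexample in this regime, so the statement is false as written and no argument of the kind sketched above can succeed uniformly over all girths. The realistic path forward is therefore to split on girth. For girth at least~$5$ the discharging program should close cleanly: 4-cycles cannot act as obstructions, $K_{2,3}$ is excluded outright, and the interactions of $5^+$-cycles through degree-2 chains are rigid enough that the only unavoidable configuration left should be $C_3$ itself. The girth-$3$ case reduces to a short analysis of how triangles can share edges with the rest of $G$ under the average-degree bound, and the girth-$4$ case must be dealt with by explicit classification — producing a finite list of extra critical graphs that must be appended to $\{C_3,K_{2,3}\}$ before the statement becomes true. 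I expect the great majority of the work to live in this girth-$4$ classification, because that is precisely where the matching-cut toolkit of prior work, and the cleaner discharging of the girth-$\geq 5$ case, both give out.
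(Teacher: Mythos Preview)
You correctly observe that the conjecture is false as stated: the paper exhibits a girth-$4$ critical graph with average degree $\tfrac{26}{9}<3$ (Fig.~\ref{fig:girth-4-counterexp}), so there is no proof to give, only a disproof together with a proof of the girth-$\ge 5$ special case (Theorem~\ref{thm:main}).  Your split-by-girth plan matches this shape.

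The substantive gap is your line ``for girth at least~$5$ the discharging program should close cleanly.''  It does not.  The paper's discharging (charge $6-2d(v)$, equivalent to yours up to sign) routes charge from each $2$-vertex along shortest paths of $3$-vertices to a $4^+$-vertex; the obstruction is a $4^+$-vertex that still ends up positive, and such a vertex is the axis of what the paper calls a \emph{windmill}: several such paths (``sails'') meeting at a common endpoint.  Ruling out windmills is the whole paper.  Matching-cut arguments and the ``pasting across small cuts'' you sketch are not enough: windmills need not sit behind any small edge-cut, and the vertices outside the windmill adjacent to two or more of its vertices (``flags'') create cycles that defeat naive pasting.  The paper's mechanism is to define \emph{decent} edge-labelings of typed graphs---good labelings with extra boundary constraints on paths ending at designated interface vertices---prove that a decent ``swell'' subgraph cannot live inside a critical graph (Lemma~\ref{lem:fundamental}), and then build the closure of any windmill inductively by \emph{gluing} decent pieces along $1$- and $2$-sums (Lemmas~\ref{lem:glue:1-sum}--\ref{lem:glue:edges}), guided by a combinatorial model (the flag graph, Lemma~\ref{lem:windmills:flag-graph-inductive}) of how flags attach to sails.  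The only configuration that survives this process is a specific ``evil wheel,'' and its presence halves the charge reaching the axis, finishing the discharging.  None of this is visible from your sketch; ``pasting across two edges sharing an endpoint'' is far too coarse to handle the flag structure.

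Your girth-$3$ remark is unsupported (the paper says nothing about girth~$3$ beyond $C_3$ itself), and your girth-$4$ classification program is not attempted in the paper: it gives a single counterexample and reformulates the conjecture by adding that graph to the list of exceptions, leaving the classification open.  So your expectation that ``the great majority of the work'' lies in girth~$4$ is, relative to what is actually proved here, inverted: all of the work is in girth~$\ge 5$, and girth~$4$ remains open.
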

Araujo et al.~\cite{AraujoCohenGiroireHavet11} prove a weaker version of this conjecture.  They establish the existence of a matching-cut, relying in part on a
theorem by Farley \& Proskurowski \cite{FarleyProskurowski84,BonsmaFarleyProskurowski2011} stating that a graph with sufficiently few edges always has a matching
cut.  They also use a characterization of extremal graphs with no matching-cut by Bonsma~\cite{Bonsma05,BonsmaFarleyProskurowski2011}.  From the proofs in Araujo et
al.~\cite{AraujoCohenGiroireHavet11}, it appears that the depths of the arguments increases rapidly as the upper bound~$3$ is approached.

In this paper, we show that there is no critical graph with average degree less than three and girth at least five.  Put differently, we prove
Conjecture~\ref{conj:main} in the case when the graph has girth at least five.

\begin{theorem}\label{thm:main}
  There is no critical graph with average degree less than three and girth at least five.
\end{theorem}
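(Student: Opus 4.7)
The plan is to argue by contradiction: suppose $G$ is a critical graph with girth at least $5$ and average degree strictly less than $3$. Since any graph of girth $\ge 5$ is neither $C_3$ nor $K_{2,3}$ (the latter has girth $4$), it is enough to derive a contradiction. The strategy has the familiar shape for sparse-graph arguments: identify a list of reducible configurations that no critical graph may contain, and then invoke a discharging argument enabled by the degree hypothesis $2|E(G)| < 3|V(G)|$ to force one of them.

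The first reducibility is minimum degree at least $2$: a pendant edge $vw$ with $\deg v = 1$ is reducible since any good labeling of $G-v$ extends by labeling $vw$ above the current range, creating no new cycles. With $\delta(G)\ge 2$, the degree hypothesis then forces an abundance of degree-$2$ vertices; writing $n_i$ for the number of degree-$i$ vertices, one obtains $n_2 > \sum_{i\ge 4}(i-3)n_i$.

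The core of the argument analyses a degree-$2$ vertex $v$ with neighbors $u_1,u_2$. Girth $\ge 5$ gives $u_1u_2\notin E(G)$ and $N(u_1)\cap N(u_2)=\{v\}$, so every new cycle created by reinserting $v$ into $G-v$ passes through $v$ via both $u_1$ and $u_2$. Let $\phi$ be any good labeling of $G-v$, which exists by criticality, and consider extending $\phi$ by labeling $vu_1,vu_2$ with values outside the range of $\phi$. Tracking local minima on cycles through $v$ via the cycle characterization, I would show that the extension with $\phi(vu_1)<\min\phi$ and $\phi(vu_2)>\max\phi$ is good if and only if there is no nondecreasing $u_1\to u_2$ path in $(G-v,\phi)$; symmetrically for the swapped extension. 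Since $G$ is critical, both extensions must fail for every good $\phi$, so each good labeling of $G-v$ admits simultaneous nondecreasing paths in both directions between $u_1$ and $u_2$. The next step is to upgrade this labeling-theoretic condition to a graph-theoretic one by perturbing $\phi$ via label swaps along the guaranteed nondecreasing paths and by considering deletion of nearby vertices and edges. The target is a reducibility lemma of the form ``no degree-$2$ vertex has a degree-$2$ neighbor'' or, more strongly, ``both neighbors of any degree-$2$ vertex have degree at least $4$''.

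I expect the main obstacle to be precisely this upgrade step: parlaying the bidirectional nondecreasing-path condition into genuine local structure on $G$ is delicate, since it requires building enough distinct good labelings of $G-v$ whose monotone-path structures conflict unless $G$ has the desired local form. The girth-$5$ hypothesis is essential here, because it forces the completing paths to have length at least $3$, keeps degree-$2$ vertices separated in the neighbourhood of any fixed vertex, and rules out the short accidental cycles that local swaps might otherwise create. Once a sufficiently strong reducibility lemma is in place, one closes with discharging: assign each vertex the initial charge $\deg(x)-3$, so that the total charge is strictly negative by the degree hypothesis; then redistribute charge from high-degree vertices to their degree-$2$ neighbours according to rules consistent with the reducibility lemma and the girth-$5$ separation, and contradict the nonnegativity of the resulting final charges.
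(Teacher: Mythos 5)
The overall shape you propose (reducible configurations plus discharging) matches the paper, but the configuration you aim for is both out of reach by your method and insufficient for the discharging, and the step you yourself flag as the ``main obstacle'' is where essentially all of the paper's work lives. Your target lemma --- that both neighbours of a $2$-vertex have degree at least $4$ --- is not what the paper proves and is not available: the correct statement (the paper's Lemma~\ref{lem:232-path}, proved via the no-matching-cut property of critical graphs, not via relabeling $G-v$) only says that a path whose endpoints have degree~$2$ and whose internal vertices have degree~$3$ must have two vertices with a common outside neighbour. Consequently a $2$-vertex may well be adjacent to $3$-vertices, with the nearest $4^+$-vertex lying at the far end of a long chain of $3$-vertices. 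The paper's discharging therefore sends charge from $2$-vertices along these chains (``sails'') to distant $4^+$-vertices, and the real difficulty is the receiving end: a $4$- or $5$-vertex that is the hub of many sails (a ``windmill'') would end up with positive charge. Ruling this out, or rather showing that every windmill closure contains a wheel-like subgraph that caps the charge received per sail at $\nfrac12$, consumes Sections~\ref{sec:windmills}--\ref{sec:no-windmill}: a classification of the ``flags'' attached to a windmill, an inductive description of their incidence structure, and a machinery of typed graphs, decent labelings and gluing operations that certifies these configurations cannot sit inside a critical graph. None of this is replaceable by perturbing a single good labeling of $G-v$ near one $2$-vertex.

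Even granting your strong reducibility lemma, your discharging does not close: with initial charge $\deg(x)-3$, a $2$-vertex needs to receive $1$, i.e.\ $\nfrac12$ from each of its two $4^+$-neighbours, but a $4$-vertex has only $1$ unit to give and a $5$-vertex only $2$, so a $4$-vertex with three or more $2$-neighbours (respectively a $5$-vertex with all five) still ends positive. These are exactly the windmill cases the paper must analyse separately (cases (iii) and (iv) of its final discharging). So the gap is concrete: you are missing the entire mechanism --- sails, windmills, flags, and the decent-labeling/gluing induction --- needed to control the long-range configurations that the average-degree hypothesis actually produces.
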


Moreover, we falsify Conjecture~\ref{conj:main} for the case of girth~4: Fig.~\ref{fig:girth-4-counterexp} shows a graph with girth~4 and average degree
$\frac{26}{9} < 3$ (9 vertices, 13 edges), which does not contain either $C_3$ or $K_{2,3}$ as a subgraph.  We leave to the reader as an exercise to argue that the
graph has no good edge labeling.  It can easily be verified that every proper subgraph has a good edge labeling, so the shown graph is critical.  In other words,
Fig.~\ref{fig:girth-4-counterexp} shows a counterexample to Conjecture~\ref{conj:main} for the case of girth~4.

\begin{figure}[htp]
  \centering
  \scalebox{.5}{\input{girth-4-counterexp.pstex_t}}
  \caption{Critical graph with girth~4 and average degree $< 3$}\label{fig:girth-4-counterexp}
\end{figure}

Another motivation behind our paper is to demonstrate how large girth makes labeling arguments easier.\footnote{%
  Indeed, until very recently, no bad graph with girth larger than four was known.  In particular, the bad graphs in Araujo et al.'s construction contain many
  4-cycles.  This fact had led us to conjecture, that there exists a finite number~$g$ such that every graph with girth at least~$g$ has a good edge-labeling; as
  mentioned above, Araujo et al.~\cite{AraujoCohenGiroireHavet11} have shown that with the additional restriction that the graphs be planar the conjecture holds true
  for $g := 6$.  The conjecture was refuted in~\cite{Mehrabian12}.
} %
In Theorem~\ref{thm:no-windmill}, roughly speaking, we prove that a critical graph with girth at least five cannot contain a ``windmill''.  A windmill essentially
consists of a number of shortest paths meeting in an ``axis'', with the paths originating from vertices of degree two and having in their interior only vertices of
degree three.  Theorem~\ref{thm:main} is a corollary of Theorem~\ref{thm:no-windmill}: using an approach inspired by the discharging method from topological graph
theory, we argue that a hypothetical critical graph with girth at least five and average degree less than three always contains a windmill.

For our proof of Theorem~\ref{thm:no-windmill}, we define a class of graphs which we call ``decent'', which have the property that they cannot be contained in a
critical graph.  More importantly, we give a \textit{gluing} operation which preserves ``decency''.  Starting from a small family of basic ``decent'' graphs, by
gluing inductively, this approach allows us to show that certain more complicated configurations cannot be contained in critical graphs, which leads to the proof of
Theorem~\ref{thm:no-windmill}.

This paper is organized as follows.  In the next section, we will discuss some notation as well as basic facts on good edge-labelings.  In
Section~\ref{sec:windmills}, we define windmills, and commence upon the proof of their non-existence.  Section~\ref{sec:gluing} contains the definition of ``decent''
graphs and the gluing mechanism.  Theorem~\ref{thm:no-windmill} is stated and proved in Section~\ref{sec:no-windmill}, and Theorem~\ref{thm:main} is derived in
Section~\ref{sec:proof-discharge}.


\section{Basic facts about good edge-labelings}

We will heavily rely on the above-mentioned characterization of a good edge-labeling using cycles instead of paths.  For this, we use the following definitions.
Let~$H$ be a path or a cycle, and $\phi\colon E(H)\to\RR$ an edge-labeling of~$H$.  Let~$Q$ be a proper sub-path of~$H$ (i.e., a path contained in~$H$ which is not
equal to~$H$) with at least one edge.  For a real number~$\mu$, we say that~$Q$ is a \textit{local minimum with value~$\mu$ in~$H$,} if $\phi(e)=\mu$ for all $e\in
E(Q)$, and for every edge~$e'\in E(H)\setminus E(Q)$ sharing a vertex with~$Q$ we have $\mu < \phi(e')$.

Distinct minima must necessarily be vertex disjoint.  Good edge-labelings can be characterized in terms of local minima of cycles.  We leave the verification of
the following easy lemma to the reader (or see~\cite{MBode-MTh}).

\begin{lemma}
  An edge-labeling~$\phi$ of a graph~$G$ is good, if, and only if, every cycle~$C$ in~$H$ has two local minima.
  \qed
\end{lemma}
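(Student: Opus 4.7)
The plan is to prove both directions by contraposition, establishing separately that (i) if some cycle of $G$ has at most one local minimum then there exist two distinct nondecreasing paths with a common pair of endpoints, and conversely (ii) that the existence of two such paths forces a cycle with at most one local minimum.

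For (ii), let $P_1, P_2$ be two distinct nondecreasing paths from $u$ to $v$. Walking from $u$ along both, let $u'$ be the last common vertex before they first diverge and $v'$ the first vertex at which they subsequently re-meet. Replace $P_1, P_2$ by the corresponding subpaths $P_1', P_2'$ from $u'$ to $v'$, which are internally vertex-disjoint nondecreasing paths whose union is a cycle $C$. Traversing $C$ from $u'$ to $v'$ along $P_1'$ the labels are nondecreasing, and from $v'$ back to $u'$ along the reverse of $P_2'$ they are nonincreasing; hence the cyclic label sequence on $C$ is bitonic and has at most one local minimum.

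For (i), assume $C$ has at most one local minimum. If it has none, then all edges of $C$ share a single common label (otherwise the maximal subpath of minimum-label edges on $C$ would itself be a local minimum), and the two arcs of $C$ between any two vertices give two distinct constant, hence nondecreasing, paths. If $C$ has a unique local minimum $Q$ of value $\mu$ and endpoints $a, b$, set $P := C - E(Q)$, viewed as a path from $a$ to $b$. The crucial claim is that $P$ has no local minimum whose subpath lies entirely in the interior of~$P$, since the $C$-neighbours of such a subpath coincide with its $P$-neighbours and would make it a second local minimum of~$C$. Hence $P$ is bitonic, with a peak vertex $v$ such that both $P[a,v]$ and $P[b,v]$ are nondecreasing, and the two required nondecreasing paths from $a$ to $v$ are $P[a,v]$ itself and the concatenation of $Q$ (constant of value $\mu$) with $P[b,v]$; nondecreasingness at the junction holds because the first edge of $P$ at $b$ has label strictly greater than $\mu$, by definition of $Q$ as a local minimum of $C$.

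The main subtlety, and the step I expect to require the most care, is that once labels may repeat, local minima are maximal constant-valued subpaths rather than single edges. For the forward direction this means $P$ may well have local minima whose subpaths touch $a$ or $b$---these need \emph{not} be local minima of $C$, because an adjacent edge in $Q$ carries the smaller value $\mu$---so the argument must rely specifically on the absence of strictly interior local minima of $P$ rather than on the absence of any local minima of $P$ whatsoever. Once that nuance is phrased correctly, the rest of the construction is routine.
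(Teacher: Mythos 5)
The paper gives no proof of this lemma at all---it is explicitly left to the reader, with a pointer to \cite{MBode-MTh}---so there is no argument of the authors' to compare against; your proof is the standard verification and it is correct in both directions. Two small points of hygiene: in the ``no local minimum'' subcase the minimum-label edges of $C$ need not form a \emph{single} subpath, but any one maximal run of them is already a local minimum, which is all the contradiction requires; and in the bitonic case you should take the peak $v$ to be the last vertex of the maximal nondecreasing initial segment of $P$ (so $v\neq a$, and $P[a,v]$ is a genuine nonempty path), and note that the concatenation $Q+P[b,v]$ is simple because $Q$ and $P$ are the two internally disjoint arcs of $C$. With those wordings tightened the proof is complete.
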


Obviously, the property of an edge-labeling being good depends only on the order relation between the labels of the edges.  In particular, scaling (multiplying each
label by a strictly positive constant), and translation (adding a constant to each label) do not change whether a labeling is good or not.

We say that a \textit{$k$-vertex} is a vertex of degree $k$; a $k^-$-vertex is a vertex of degree at most $k$; and a $k^+$-vertex is a vertex of degree at least $k$.

Araujo et al.~\cite{AraujoCohenGiroireHavet11} proved the following property of critical graphs.

\begin{lemma}[\cite{AraujoCohenGiroireHavet11}]\label{lem:acfg}\label{lem:matchingcut}\label{lem:degree-1}
  A critical graph does not contain a matching-cut.
  
  In particular, the minimum degree of a critical graph is at least two, and, unless it is a triangle~$C_3$, it contains no two adjacent 2-vertices.
\end{lemma}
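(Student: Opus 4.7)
The plan is to prove the main claim by contradiction. Assume $G$ is critical and contains a matching-cut $M$. Let $G_1$, $G_2$ be the two components of $G - M$ (both non-empty since $M$ is a cut). Each $G_i$ is a proper subgraph of $G$, so by criticality it admits a good edge-labeling $\phi_i$. Using the fact that goodness depends only on the order of labels, I may assume all labels within each $\phi_i$ are distinct. I now construct a labeling $\phi$ of $G$ by rescaling so that $\phi_1$'s labels lie in $[0,1]$, $\phi_2$'s labels lie in $[2,3]$, and the edges of $M$ receive pairwise distinct labels in $(1,2)$.

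To verify that $\phi$ is good via the cycle characterization, take any cycle $C$ of $G$. If $E(C) \subseteq E(G_i)$, then $C$ has at least two local minima inherited from $\phi_i$. Otherwise $C$ uses at least two $M$-edges (since $M$ is an edge-cut, every crossing cycle meets it in an even number of edges). The key observation is that no $e \in M \cap E(C)$ can be a local minimum of $C$: because $M$ is a matching, neither edge of $C$ incident to $e$ lies in $M$, and the one touching the $G_1$-endpoint of $e$ has label in $[0,1]$, strictly below the label of $e$ in $(1,2)$. Decomposing $C$ by its $M$-edges yields maximal sub-paths alternately inside $G_1$ and $G_2$; in each such sub-path, the edge of smallest label is a local minimum of $C$, since its two $C$-neighbors are either edges of the same sub-path with larger label or else $M$-edges with labels strictly above the sub-path's interval. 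Summing across the two sides gives at least two local minima on $C$, contradicting that $G$ is bad.

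The stated consequences then follow with a little extra work. A vertex of degree~$0$ would make $G$ disconnected; removing it yields a proper subgraph with identical edge set, hence again bad, contradicting criticality. A vertex of degree~$1$ has an incident bridge, which is a size-one matching-cut. If $u,v$ are adjacent $2$-vertices with distinct other-neighbors $u' \neq v'$, then $\{uu', vv'\}$ is a matching-cut. The remaining case $u' = v'$ yields a triangle $T = uvu'$ in $G$; but $C_3$ has no good edge-labeling at all (a $3$-cycle with distinct labels has only the smallest edge as a local minimum, and it is easy to see that no equal-label configuration produces two local minima either), so unless $G = C_3$, the triangle $T$ is a proper bad subgraph, contradicting criticality. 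The main subtlety lies in the combined-labeling verification: the matching condition on $M$ is essential, for otherwise two adjacent $M$-edges could jointly form a local-minimum sub-path of a cycle and the counting argument would collapse.
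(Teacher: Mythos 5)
The paper does not actually prove this lemma itself --- it is quoted from Ara\'ujo et al.\ --- so I am judging your argument on its own terms. Your derivation of the degree consequences from the matching-cut statement is fine, and your overall strategy for the main claim (split along the matching-cut, label each side by criticality, reassign labels to $M$, count local minima on cycles) is the standard and correct one. However, placing the $M$-labels \emph{between} the two ranges breaks the verification. Your key sentence --- that in each maximal sub-path the minimum edge is a local minimum because adjacent $M$-edges have ``labels strictly above the sub-path's interval'' --- is true only for the $G_1$-sub-paths (interval $[0,1]$ versus $(1,2)$); for a $G_2$-sub-path the adjacent $M$-edges, with labels in $(1,2)$, lie strictly \emph{below} its interval $[2,3]$, so its minimum edge need not be a local minimum of $C$. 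Since you have also (correctly) shown that no $M$-edge is a local minimum, a crossing cycle with exactly two $M$-edges is guaranteed only \emph{one} local minimum, coming from its single $G_1$-sub-path. Concretely, a $4$-cycle $abcd$ with $ab\in E(G_1)$, $cd\in E(G_2)$, $bc,da\in M$ and labels $\phi(ab)=0.5$, $\phi(bc)=1.3$, $\phi(cd)=2.5$, $\phi(da)=1.7$ has $ab$ as its unique local minimum, so your combined labeling is not good.

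The fix is small but it is a genuinely different construction: give the edges of $M$ pairwise distinct labels lying strictly \emph{below} both ranges (say in $[-2,-1]$, keeping $\phi_1$ in $[0,1]$ and $\phi_2$ in $[2,3]$). Because $M$ is a matching, every $M$-edge on a crossing cycle has both of its cycle-neighbours outside $M$, hence with strictly larger labels, so every $M$-edge \emph{is} a local minimum; your parity observation then yields at least two of them, and cycles contained in one side are handled as before. (Alternatively, placing $M$ strictly \emph{above} both ranges makes every maximal sub-path contribute its minimum edge as a local minimum, and there are at least two such sub-paths.) Either variant completes the proof; the middle placement you chose defeats both mechanisms simultaneously. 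The rest of your write-up --- the parity of $|M\cap E(C)|$, the use of the matching property to rule out length-zero sub-paths and adjacent $M$-edges, and the case analysis for degrees $0$, $1$ and adjacent $2$-vertices including the triangle case --- is correct.
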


\myparagraph{%
  For the rest of the section,%
} 
let~$G$ be a critical graph other than~$C_3$ and~$K_{2,3}$.  We prove some basic properties of~$G$. 

\begin{lemma}\label{lem:3-cycles}\label{lem:232-loops}
  Let $C$ be a cycle in $G$ whose every vertex has degree at most three.  Then there are two vertices of $C$ with a common neighbour in $G-C$.
\end{lemma}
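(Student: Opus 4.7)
The plan is to proceed by contradiction via a matching-cut argument. Suppose no vertex $w\in V(G)\setminus V(C)$ is adjacent to two distinct vertices of $C$; I will show that under this assumption $G$ admits a matching-cut, contradicting Lemma~\ref{lem:matchingcut}.

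The key structural observation is that each vertex $v\in V(C)$ already uses two of its at most three incident edges on $C$, so $v$ has at most one neighbor in $V(G)\setminus V(C)$. Under the contradictory hypothesis, any two such outside-neighbors (of distinct $C$-vertices) are themselves distinct. Putting these two facts together, the edge set $\delta(V(C))$ of edges of $G$ with exactly one endpoint in $V(C)$ has no two edges sharing any endpoint, i.e., it is a matching.

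To upgrade ``matching'' to ``matching-cut'' I need $\delta(V(C))\neq\emptyset$ and $V(G)\setminus V(C)\neq\emptyset$. A critical graph must be connected: otherwise the components, each of which is a proper subgraph of $G$ and so has a good labeling, would combine into a good labeling of $G$, contradicting that $G$ is bad. Hence, once $V(C)\neq V(G)$ is established, connectivity of $G$ ensures $\delta(V(C))\neq\emptyset$, and Lemma~\ref{lem:matchingcut} gives the desired contradiction.

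The main obstacle I anticipate is the degenerate case $V(G)=V(C)$, where the stated conclusion is vacuous and the matching-cut argument does not directly apply. Since $G\neq C_3,K_{2,3}$ and since $C_n$ for $n\geq 4$ admits a good edge-labeling, if $V(G)=V(C)$ then $C$ would be a Hamilton cycle in $G$ and $G$ must have at least one chord, whose two endpoints are necessarily 3-vertices of $C$. Combining Lemma~\ref{lem:acfg} (no two adjacent 2-vertices) with the chord structure forces the 3-vertices to be ``dense'' along $C$, in a way that I expect to rule out: a short case analysis should produce either a single chord together with a suitable arc of $C$ forming a matching-cut, or an explicit good edge-labeling of $G$---either outcome contradicting the criticality of $G$.
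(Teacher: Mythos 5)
Your main case, $V(C)\neq V(G)$, is correct and is essentially the paper's own argument: under the negation of the conclusion, the set of edges with exactly one endpoint in $V(C)$ is a matching (each vertex of $C$ spends two of its at most three edges on $C$, and by hypothesis no outside vertex is hit twice), and connectivity of the critical graph makes it a nonempty matching-cut, contradicting Lemma~\ref{lem:acfg}. Spelling out the connectivity of a critical graph is a point the paper leaves implicit.

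The genuine gap is the Hamiltonian case $V(G)=V(C)$, which you rightly identify as the crux --- the conclusion is vacuously false there, so this case must be excluded outright --- but then do not prove. ``A short case analysis should produce either a single chord together with a suitable arc of $C$ forming a matching-cut, or an explicit good edge-labeling'' is a statement of hope, not an argument: the chord-plus-arc edge sets fail to be matchings as soon as some other chord is attached next to an endpoint of the chosen arc (in $C_6$ with all three long diagonals, i.e.\ $K_{3,3}$, \emph{every} such set fails), so the dichotomy would need real work, presumably exploiting the $C_3$- and $K_{2,3}$-freeness of $G$. The paper sidesteps the issue by taking $C'$ to be a \emph{shortest} cycle whose vertices all have degree at most three: in the spanning case minimality forbids chords, so $G$ is a cycle of length at least four and therefore has a good edge-labeling, contradicting criticality. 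Within your setup one could instead pass to a girth cycle $C''$ of the (now sub-cubic) graph $G$: a common outside neighbour of two vertices of $C''$ would create a shorter cycle, a triangle, or a $K_{2,3}$, all impossible, so your non-spanning matching-cut argument applies to $C''$. As written, though, your proof of the lemma is incomplete.
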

\begin{proof}
  We proceed by contradiction: let $C'$ be a shortest cycle whose every vertex has degree at most three.  If $G-C'\neq \emptyset$, then it can be easily seen that
  the set of edges with exactly one endpoint in $C'$ forms a matching-cut, contradicting Lemma~\ref{lem:acfg}.  If $G-C' = \emptyset$, then~$G$ is a cycle.  Since~$G
  \ne C_3$, there is a good edge-labeling for this cycle, contradicting the criticality of~$G$.
\end{proof}



\begin{lemma}\label{lem:232-path}
  Let~$P$ be a path of length at least one in~$G$ whose end vertices have degree two and internal vertices have degree at most three.  Then two vertices of~$P$ have
  a common neighbour in~$G-P$.
\end{lemma}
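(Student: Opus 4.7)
\emph{Plan.}
I would attack the lemma by a shortest counterexample argument paralleling the proof of Lemma~\ref{lem:3-cycles}. Suppose the conclusion fails, and let $P = u_0 u_1 \cdots u_k$ be a shortest counterexample: its endpoints $u_0,u_k$ have degree $2$ in $G$, every internal $u_i$ has degree at most $3$, and no two of its vertices share a neighbour in $G-P$. Let $M$ be the set of edges of $G$ having exactly one endpoint in $V(P)$. Because $u_0, u_k$ have degree $2$ and every other vertex of $P$ has degree at most $3$, each vertex of $V(P)$ contributes at most one edge to $M$; by the contradiction hypothesis, no vertex of $V(G)\setminus V(P)$ is incident with two edges of $M$. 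So $M$ is a matching in $G$.

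\emph{Case 1: $V(G) \setminus V(P) \neq \emptyset$.} Critical graphs are connected (a disconnected critical graph would have a bad component as a proper bad subgraph), so $M$ is nonempty and separates $V(P)$ from its complement. Thus $M$ is a matching-cut, contradicting Lemma~\ref{lem:matchingcut}.

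\emph{Case 2: $V(G) = V(P)$.} Now $G$ consists of $P$ together with chords. Since $G \neq C_3$, the graph $G$ is triangle-free (a triangle would be a proper bad subgraph). By Lemma~\ref{lem:degree-1}, $u_1$ and $u_{k-1}$ must be $3$-vertices, each carrying a chord. Let $u_0u_a$ be the chord at $u_0$; triangle-freeness forces $a \geq 3$. The cycle $C_0 = u_0u_1\cdots u_au_0$ has every vertex of degree at most $3$, so Lemma~\ref{lem:3-cycles} supplies a common neighbour $u_m \in V(G)\setminus V(C_0)$ of two vertices of $C_0$ (whence $a<k$). Enumerating the at most three $G$-neighbours of $u_m$ (namely $u_{m\pm 1}$ and a possible chord) forces $m = a+1$, with a chord $u_{a+1}u_c$ satisfying $c \leq a-2$. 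Now the shortcut path $Q = u_0u_au_{a+1}\cdots u_k$ has length $k-a+1 < k$ and satisfies the hypothesis, so by the minimality of $P$ its conclusion holds: some common neighbour $u_j$ of two vertices of $Q$ lies in $V(G)\setminus V(Q) = \{u_1,\ldots,u_{a-1}\}$. An analogous neighbour analysis forces $j\in\{1,a-1\}$, with $u_j$ carrying a chord into $V(Q)$.

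\emph{Main obstacle.} The delicate step is Case 2: converting the chord structure $\{u_0u_a,\, u_{a+1}u_c,\, \text{chord at } u_j\}$ into a proper subgraph of $G$ isomorphic to $K_{2,3}$. Triangle-freeness and the degree constraints severely restrict the possible chord endpoints, and the pair $\{u_0,u_{a+1}\}$, which already shares the neighbour $u_a$, together with the common neighbours supplied by the other chords, should yield a pair of vertices of $G$ with three common neighbours. A careful case split on $j\in\{1,a-1\}$ and on the endpoint of its chord then produces $K_{2,3}$ as a proper subgraph, contradicting the criticality of $G$ (since $G \neq K_{2,3}$). This combinatorial extraction of the $K_{2,3}$ is the main technical difficulty; everything else in the proof is a direct adaptation of the matching-cut template used for Lemma~\ref{lem:3-cycles}.
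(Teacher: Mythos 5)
Your Case~1 is exactly the paper's argument (the edges leaving $V(P)$ form a matching by the degree bounds together with the counterexample hypothesis, hence a matching-cut contradicting Lemma~\ref{lem:acfg}), and it is correct. The problem is Case~2, where your proof is not actually finished. Everything up to the identification of the chords $u_0u_a$, $u_{a+1}u_c$ with $c\le a-2$, and a chord at $u_j$ for $j\in\{1,a-1\}$ checks out, but the decisive step --- that this chord configuration forces a proper $K_{2,3}$ subgraph --- is only asserted (``should yield'', ``then produces''), and it is not evident that it can be carried out. For instance, the pair $\{u_0,u_{a+1}\}$ has $u_a$ as a common neighbour, but a second common neighbour already requires $c=1$ (since $u_0$'s only neighbours are $u_1$ and $u_a$), and nothing in your analysis pins the remaining chords down to that extent; a priori the contradiction in this case need not come from a $K_{2,3}$ at all. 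As written, the crux of Case~2 is a conjecture, not a proof.

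The gap is a consequence of where you place the minimality. The paper takes $P'$ to be a shortest path of the required form (endpoints of degree two, internal vertices of degree at most three), not a shortest counterexample. With that choice, Case~2 is a one-liner: since $G$ is triangle-free and $u_0$ has degree two, its second neighbour is some $u_a$ with $a\ge 2$ on $P'$, and then $u_0u_au_{a+1}\cdots u_k$ is a strictly shorter path of the same form, contradicting the choice of $P'$; no $K_{2,3}$ and no chord analysis are needed. By minimizing over counterexamples instead, you can only conclude that the shorter path $Q$ satisfies the conclusion of the lemma, which hands you a common neighbour $u_j$ inside $\{u_1,\dots,u_{a-1}\}$ rather than a contradiction, and this is what drives you into the unfinished combinatorial extraction. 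I recommend restructuring Case~2 along the paper's lines rather than trying to complete the $K_{2,3}$ hunt.
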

\begin{proof}
  We proceed by contradiction: let~$P'$ be a shortest path between two vertices of degree two with inner vertices of degree three.  If $G-P'\neq \emptyset$, then the
  set of edges with exactly one endpoint in~$P'$ forms a matching cut; contradicting Lemma~\ref{lem:acfg}.  If $G-P' = \emptyset$, then~$P'$ cannot be a shortest
  such path.  (We note that the proof goes through if the length of $P$ is~1.)
\end{proof}




\section{Windmills}\label{sec:windmills}

To motivate the definition of windmills, let us take a look at how they will be used in the proof of Theorem~\ref{thm:main}.
The proof uses a discharging type argument.  We assign ``charges'' to the vertices: vertex $v$ receives charge $6-2d(v)$.  Note that only 2-vertices have positive
charge.  Since the average degree of $G$ is less than $3$, the total charge of the graph is positive.  Now, we ``discharge'' 2-vertices.
Applying Lemma~\ref{lem:232-path}, charges are sent from 2-vertices to $4^+$-vertices via shortest paths consisting of only 3-vertices. 
Later on, we will show that these paths are internally disjoint.  Since no charge is lost during the discharging phase, if after discharging all vertices have
non-positive charge, then we have a contradiction.  However, there may be some vertices with positive charge.  These vertices are the centers of the structures which
we refer to as ``windmills.''
  
\myparagraph{In the remainder of this section,} 
let~$G$ be a critical graph of girth at least five.

\myparagraph{For a tree~$H$ and vertices $x,y$ of $H$, we denote by $xHy$ the unique path between $x$ and~$y$ in~$H$.}
An \textit{internally shortest 3-path} is a path $P=x_0 \dots x_\ell$ with $\ell \ge 1$ and $d(x_j)=3$ for $j \in \{1,\dots,\ell-1\}$, such that, for $e := x_0x_1$,
the path $x_1Px_\ell$ is a shortest path in $G-e$.  In particular, the path $x_1Px_\ell$ is induced in~$G$.  We say that $P$ starts in~$x_0$ and ends in~$x_\ell$.

\begin{remark}\label{rem:2-2-path}
  By Lemma~\ref{lem:232-path}, the graph $G$ has no internally shortest 3-path that starts and ends in 2-vertices.
\end{remark}

So for an edge $x_0x_1$, a \textit{sail} with \textit{tip $x_0x_1$} is defined to be an internally shortest 3-path $P=x_0 x_1 \dots x_\ell$ that starts in a 2-vertex
and ends in a $4^+$-vertex which has minimum length $\ell$ among all such internally shortest 3-paths.

\begin{remark}\label{rem:closest-non3-vtx}
  Among the vertices~$v$ of degree $\deg(v) \ne 3$, the ending vertex $x_\ell$ of a sail is among those which have minimum distance from $x_1$ in $G-e$.  Note that,
  in $G-e$, the vertex~$x_0$ has larger distance from $x_1$ than $x_\ell$: otherwise we would have a contradiction (either from having a $4^+$ vertex closer to~$x_1$
  or by Lemma~\ref{lem:232-loops}).
\end{remark}

\begin{definition}
  Let $k\ge 3$ be an integer, and $y$ a vertex of degree $\max(4,k)$ or $k+1$.
  A \textit{$k$-windmill} with \textit{axis~$y$} in~$G$ is an induced subgraph~$H$ of~$G$, spanned by the union of~$k$ sails beginning in~$k$ distinct tips, and each
  of them ending in~$y$.
  A windmill $H$ is called \textit{complete} in $G$, if it is not a proper subgraph of another windmill.
\end{definition}

Note that it is possible that two sails of the same windmill start in the same 2-vertex (but have different tips).


%
%
%

\begin{lemma}\label{lem:sails}\label{lem:internally-disjoint}
  Let $P=x_0 x_1 \dots x_\ell$ and $P'=x'_0 x'_1 \dots x'_{\ell'}$ be any two sails in $G$. Then 
  \begin{enumerate}[(a)]
  \item if a vertex is adjacent to two vertices of $P$, then one of the two is the starting vertex $x_0$;
  \item if $P$ and $P'$ have distinct tips but identical ending, i.e., $x_\ell=x'_{\ell'}$, 
    then no vertex is adjacent to two (or more) of the vertices of the path $x_1 \dots x_{\ell-1} x_\ell x'_{\ell'-1} \dots x'_1$.
  \item $P$ and $P'$ either share the same tip or they are internally disjoint;
  \end{enumerate}
\end{lemma}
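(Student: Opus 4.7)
The plan is to combine three tools uniformly: the girth-$\geq 5$ bound, the defining shortest-path property of a sail (the middle $x_1 P x_\ell$ is a shortest path in $G - e$, where $e = x_0 x_1$), and Remark~\ref{rem:closest-non3-vtx}, which strengthens this by asserting that $x_\ell$ realizes the minimum distance in $G - e$ from $x_1$ among \emph{all} non-3-vertices. Throughout I interpret each of (a)--(c) under the implicit assumption that the ``external'' vertex is not on the path(s) in question.

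For (a), let $v \notin V(P)$ be adjacent to $x_i$ and $x_j$ with $1 \leq i < j \leq \ell$. The cycle $v\,x_i x_{i+1} \cdots x_j\,v$ has length $j - i + 2$, which the girth bound forces to be at least $5$, so $j - i \geq 3$. But then the length-$2$ path $x_i v x_j$ lies in $G - e$ (neither endpoint being $x_0$ or $x_1$) and bypasses three or more edges of $P$, contradicting the shortest-path property. Hence $i = 0$.

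For (b), let $v \notin V(Q)$ be adjacent to two distinct vertices $u_1, u_2 \in V(Q)$. If both lie in the $P$-side $\{x_1,\ldots,x_{\ell-1},y\} \subseteq V(P)$, part (a) would force one of them to be $x_0$, which is impossible because $x_0 \notin V(Q)$ (a $2$-vertex, whereas $V(Q)$ contains only $3$-vertices and the $4^+$-vertex $y$); the $P'$-side case is symmetric. In the remaining case, $u_1 = x_i$ and $u_2 = x'_{i'}$ with $1 \leq i \leq \ell - 1$ and $1 \leq i' \leq \ell' - 1$, and the cycle $v\,x_i \cdots y \cdots x'_{i'}\,v$ has length $(\ell - i) + (\ell' - i') + 2 \geq 5$. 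When $\deg(v) \neq 3$, the path $x_1 \cdots x_i v$ places a non-3-vertex at distance at most $i$ from $x_1$ in $G - e$, so Remark~\ref{rem:closest-non3-vtx} forces $i = \ell - 1$ and symmetrically $i' = \ell' - 1$; but then $v\,x_{\ell-1}\,y\,x'_{\ell'-1}\,v$ is a $4$-cycle, contradicting the girth. When $\deg(v) = 3$, the plan is to consider the path $x_0 x_1 \cdots x_i v x'_{i'} \cdots x'_1 x'_0$, which joins two $2$-vertices through internal $3$-vertices, and upgrade it to an internally shortest 3-path: Remark~\ref{rem:closest-non3-vtx} rules out any shorter $x_1$--$x'_0$ route in $G - e$ that would pass through a non-3-vertex at distance strictly below $\ell - 1$, so one can produce such a candidate whose middle \emph{is} shortest, thereby contradicting Remark~\ref{rem:2-2-path}.

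For (c), suppose $P$ and $P'$ have distinct tips yet share a vertex. Degree constraints ($2$ at the $2$-vertex endpoints, $3$ at interior vertices, $\geq 4$ at the $4^+$-vertex endpoints) force any shared vertex $v = x_i = x'_{i'}$ to be interior to both sails. Since $\deg(v) = 3$, its four candidate neighbors $x_{i-1}, x_{i+1}, x'_{i'-1}, x'_{i'+1}$ collapse to at most three distinct vertices, so at least one pair coincides. Iterating along whichever direction two neighbors match aligns the sails segment-by-segment until one of them reaches an endpoint, where a degree mismatch (a $2$-vertex or $\geq 4$-vertex of one sail faced with a $3$-vertex of the other) either yields a contradiction outright or forces the tip-edges to coincide, contradicting the hypothesis of distinct tips. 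I expect the main obstacle to lie in the $\deg(v) = 3$ sub-case of (b): verifying that the constructed $2$-vertex-to-$2$-vertex path is genuinely an internally shortest 3-path (rather than merely a path of the right shape) requires delicate use of Remark~\ref{rem:closest-non3-vtx}, and possibly a separate argument handling the degenerate case $x_0 = x'_0$.
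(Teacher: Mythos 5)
Your part (a) is correct and is essentially the paper's (one-line) argument, and your degree-$\ne 3$ sub-case of (b) is also sound. The genuine gap is in the sub-case of (b) that you yourself flag: when the common neighbour $v$ has degree~3, your plan is to ``upgrade'' the path $x_0\cdots x_i\,v\,x'_{i'}\cdots x'_0$ to an internally shortest 3-path and contradict Remark~\ref{rem:2-2-path}. This cannot work as stated: an internally shortest 3-path requires its middle to be a shortest path in $G-e$, and nothing --- in particular not Remark~\ref{rem:closest-non3-vtx}, which only controls distances to non-3-vertices --- rules out a much shorter $x_1$--$x'_0$ route elsewhere in the graph; and a genuinely shortest such route need not pass through $v$ nor have only 3-vertices in its interior. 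The tool you actually need is Lemma~\ref{lem:232-path}, which applies to an \emph{arbitrary} path between two 2-vertices with internal $3^-$-vertices and yields a common neighbour of two of its vertices. Since that conclusion is not by itself a contradiction, the paper's proof chooses the offending common neighbour $w_1$ whose attachment to $P$ is closest to $x_1$ (ties broken on $P'$), shows $w_1$ has degree~3, reroutes through $w_1$ to obtain a strictly shorter 2-vertex-to-2-vertex path $Q_2$ (strictly shorter because the girth is at least~5), applies Lemma~\ref{lem:232-path} again, and iterates; the strictly decreasing lengths terminate the descent in a contradiction. This descent is the heart of the lemma and is absent from your proposal.

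Part (c) also has a gap, and here your route genuinely differs from the paper's (which simply runs the same descent on the path $x_0\cdots x_i{=}x'_{i'}\cdots x'_0$). Your neighbour-alignment iteration stalls after one step: at the second shared vertex, say $u=x_{i+1}=x'_{i'-1}$, the four candidate neighbours $x_i, x_{i+2}, x'_{i'-2}, x'_{i'}$ already collapse to at most three distinct vertices because $x_i=x'_{i'}$ is the shared vertex you came from, so the degree-3 count is satisfied without forcing any new coincidence, and the segment-by-segment alignment does not propagate. Both (b) and (c) therefore need the descent argument, or some replacement for it, before the proof is complete.
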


\begin{proof}
  The facts that $G$ has girth at least 5 and that $P$ is a shortest path from its tip $x_0x_1$ to a $4^+$-vertex easily imply (a).
  To prove (b), assume otherwise, that is, there are vertices adjacent to two vertices of the path $Q_1=x_1 \dots x_{\ell-1} x_\ell x'_{\ell'-1} \dots x'_1$.  Of all
  such vertices, let $w_1$ be a vertex whose neighborhood's intersection with $P$, say $x_i$, is closest to $x_1$ on~$P$.  If there are ties, then take $w_1$ to be
  the vertex whose neighborhood's intersection with $P'$, say $x'_j$, is closest to~$x'_1$ on~$P'$.  Since $G$ has girth at least 5, vertex $w_1$ is well defined.
  Now recalling that $P$ and $P'$ are shortest paths from their tips to a $4^+$-vertex, it can be easily seen that $w_1$ must be a 3-vertex (cf.\
  Remark~\ref{rem:2-2-path}).  By Lemma~\ref{lem:232-path} and the choice of $w_1$, two vertices of the path $Q_2=x_1 \dots x_i w_1 x'_j \dots x'_1$ have a common
  neighbor (one of which must be~$w_1$).  Notice that $|Q_2| < |Q_1|$, since the girth or~$G$ is at least~5.  Choose~$w_2$ for~$Q_2$ the same way that we chose~$w_1$
  for~$Q_1$ and we continue the process.  Since the lengths of~$Q_i$'s are decreasing, the process cannot be repeated forever, so at some point we get a
  contradiction to Lemma~\ref{lem:232-path}.

  Item~(c) follows along the same lines as Item~(b).
\end{proof}

Since a windmill contains a unique $4^+$-vertex, it can only be a subgraph of another windmill if their axes coincide, and the larger one has at least one more sail.
The sails of a windmill are pairwise internally disjoint by Lemma~\ref{lem:internally-disjoint} (and the definition of a windmill, by which all tips have to be
distinct).  Moreover, there are no edges between vertices of the same sail, except if one of them is the starting vertex and the other is the axis.  Note that every
edge between the axis and the starting vertex of a sail is itself a sail.

%
%
%
%

\subsection{Flags}

For a fixed complete windmill, we now study vertices that are themselves outside the windmill, but that have two or more neighbors inside the windmill.  We call such
a vertex a \textit{flag.}  We need to classify the flags.  For this, we make the following notational convention.  Let~$H$ be a complete windmill and~$w$ an
\textit{$H$-flag} (i.e., a vertex not in~$H$ that has at least two neighbors in~$H$).  We say that~$w$ has \textit{signature} $(d_0\mid d_1,d_2,\dots)$, if $d(w) =
d_0$, and the neighbors of~$w$ in~$H$ have degrees~$d_i$, $i=1,2,\dots$, listed with multiplicities.  We will conveniently replace sub-lists with an asterix~$*$: For
example, $w$ has signature $(d_0\mid d_1,*)$ if it has degree~$d_0$ and at least one of the neighbors of~$w$ in~$H$ has degree~$d_1$.
Or, similarly, $w$ has signature $(d_0\mid d_1,d_2,d_3,*)$, if~$w$ has degree~$d_0$, at least three neighbors in~$H$, and these three are of degrees~$d_i$, $i=1,2,3$.
We will also replace the degree of~$w$ with a joker: $w$ has signature $(*\mid d_1,d_2,\dots)$, if the neighbors of~$w$ in~$H$ have degrees~$d_i$, $i=1,2,\dots$.

The concept of signature is only needed to reduce the possible occurences of flags to a very small number of cases.  It will not be used beyond this section.


\begin{lemma}\label{lem:simple-signatures}
  Let $H$ be a $k$-windmill.
  The graph $G$ has no flag with either of the following signatures:
  \begin{enumerate}[(a)]
  \item\label{lem:simple-signatures:2-2} $(2\mid 2,*)$,
  \item\label{lem:simple-signatures:3-22} $(3\mid 2,2,*)$
  \item\label{lem:simple-signatures:*33} $(*\mid 3,3,*)$
  \item $(*\mid 3,4^+,*)$
  \item $(3\mid 2,4^+,*)$
  \end{enumerate}
\end{lemma}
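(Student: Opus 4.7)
My plan is to handle the five signatures case by case, relying on the structural lemmas already established. The key observation at the outset is that, inside $H$, the 2-vertices are precisely the starting vertices of sails, the unique $4^+$-vertex is the axis~$y$, and all remaining vertices are interior sail vertices of degree~$3$. Case~(a) is then immediate: a degree-$2$ flag~$w$ has exactly two neighbors, both in $H$ by definition, so having a 2-vertex neighbor in~$H$ means two adjacent 2-vertices, forbidden by Lemma~\ref{lem:degree-1}.

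For~(b), the two 2-vertex neighbors of $w$ in~$H$ are distinct starting vertices $x_0,x_0'$; each has a unique other neighbor in~$G$, call these $x_1$ and~$x_1'$. The path $x_0\,w\,x_0'$ satisfies the hypotheses of Lemma~\ref{lem:232-path}, so two of its three vertices share a common neighbor outside the path. A common neighbor of $\{x_0,w\}$ must equal $x_1$, creating the triangle $x_0\,w\,x_1$; symmetrically for $\{x_0',w\}$; and a common neighbor of $\{x_0,x_0'\}$ forces $x_1=x_1'$, creating the $4$-cycle $x_0\,x_1\,x_0'\,w$. Each option contradicts girth~$\ge 5$. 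For~(c) and~(d), the $3$-vertex neighbors of $w$ in $H$ must be interior sail vertices. In~(c), if these two neighbors lie on a common sail then Lemma~\ref{lem:internally-disjoint}(a) forces one of them to be the (degree-$2$) starting vertex, contradiction; if they lie on distinct sails of~$H$, which share axis $y$ and have distinct tips, Lemma~\ref{lem:internally-disjoint}(b) is violated. In~(d), the $4^+$-neighbor of $w$ is the axis $y$, and $y$ together with the interior $3$-neighbor lies on a common sail, again violating Lemma~\ref{lem:internally-disjoint}(a).

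The delicate case is~(e), for which the completeness of~$H$ is essential. I would show that $x_0\,w\,y$ is itself a sail with tip $x_0 w$: its interior vertex~$w$ has degree~$3$, the single edge $wy$ is trivially a shortest path from $w$ to the $4^+$-vertex~$y$ in $G\setminus\{x_0 w\}$, and length~$2$ is minimum for a sail with this tip since $w$ is not a $4^+$-vertex. Since $w\notin H$, the tip $x_0 w$ differs from every tip of~$H$, so adjoining this sail to the $k$ sails of~$H$ gives a candidate $(k+1)$-windmill at~$y$. The only potential obstruction is the axis-degree condition $d(y)\in\{\max(4,k+1),k+2\}$, which fails only if $d(y)=k\ge 4$; but in that case every neighbor of~$y$ would already lie in~$H$ (filling the $k$ edges used by the sails), contradicting $wy\in E(G)$ with $w\notin H$. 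Hence $H$ properly extends to a $(k+1)$-windmill, contradicting its completeness. The main obstacle I anticipate is ensuring this axis-degree bookkeeping is airtight---carefully tracking which edges incident to~$y$ are already consumed by sails of~$H$ and verifying that the attached structure really qualifies as a windmill---which is what makes~(e) the genuine heart of the lemma.
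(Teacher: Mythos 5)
Your proof is correct and follows essentially the same route as the paper's: (a) via Lemma~\ref{lem:acfg}, (b) via Lemma~\ref{lem:232-path} combined with the girth bound, (c)--(d) via Lemma~\ref{lem:sails}, and (e) by observing that $x\,w\,y$ is itself a sail and contradicting the completeness of~$H$. Your explicit bookkeeping of the axis-degree condition in case~(e) is a careful refinement of a point the paper passes over silently.
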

\begin{proof}
  Lemma~\ref{lem:acfg} implies that there is no flag with signature $(2\mid 2,*)$.  Lemma~\ref{lem:232-path} gives~(\ref{lem:simple-signatures:3-22}).
  Lemma~\ref{lem:sails}(b) implies that there are no flags with signatures~$(*\mid 3,3,*)$ and~$(*\mid 3,4^+,*)$.  For $(3\mid 2,4^+,*)$, let~$w$ be the flag and
  let~$y$ the axis of the (complete) windmill, and~$x$ the staring vertex of a sail such that $x \sim w \sim y$.  (We use the symbol ``$\sim$'' for the adjacency
  relation in~$G$).  Since $w$ is a 3-vertex, $x w y$ is a sail.  Adding
  the vertex $w$ to $H$ thus gives a larger windmill, contradicting the maximality of~$H$.
\end{proof}


\begin{lemma}\label{lem:windmill:3-23--only-other-sail}
  Let $H$ be a complete windmill with axis $y$ and let $w$ be an $H$-flag of signature $(3\mid 2,3,*)$, so that there are $x,v\in V(H)$ with $d(x) = 2$,
  $d(v) = 3$, and $x\sim w \sim v$.  Then $x$ and $v$ are
  not in the same sail of~$H$.
\end{lemma}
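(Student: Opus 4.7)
The plan is to assume, for contradiction, that $x$ and $v$ both lie on a single sail $P = x_0 x_1 \cdots x_\ell$ of $H$. Since the only $2$-vertex of $P$ is its starting vertex, $x = x_0$, and since $v$ has degree three, $v = x_j$ with $1 \le j \le \ell - 1$. I would first pin down $j$: the path $x_1 x_2 \cdots x_j w x_0$ in $G - x_0 x_1$ has length $j+1$, but Remark~\ref{rem:closest-non3-vtx} forces $d_{G - x_0 x_1}(x_1, x_0) > \ell - 1$; combined with $j \le \ell - 1$, this gives $j = \ell - 1$. Thus $v$ is the penultimate vertex of $P$, $v \sim y$, and the cycle $x\, x_1 \cdots v\, w\, x$ gives $\ell \ge 4$ by girth.

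Let $z$ be the third neighbour of $w$. If $z = y$, then $w$'s signature in $H$ includes $(3 \mid 2, 4^+, *)$, excluded by Lemma~\ref{lem:simple-signatures}(e); so $z \neq y$. The argument now splits on $d(z)$. For $d(z) = 2$, apply Lemma~\ref{lem:232-path} to the $2$-$3$-$2$ path $xwz$: a common neighbour of $\{x,w\}$ must be $x$'s other neighbour $x_1$, giving the triangle $x\, x_1\, w$; one of $\{x,z\}$ would force $x_1 \sim z$, giving the $4$-cycle $x\, x_1\, z\, w$; and one of $\{w,z\}$ would force $v$ to be $z$'s second neighbour, giving the triangle $v\, w\, z$. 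All three contradict the girth.

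For $d(z) = 3$, the vertex $w$ has no $4^+$-neighbour, so any sail with tip $xw$ has length at least $3$. The path $P' = xwvy$ qualifies: since $w \not\sim y$, the $2$-path $wvy$ is a shortest $w \to y$ path in $G - xw$, making $P'$ an internally shortest $3$-path of the minimum length $3$. Then $P$ and $P'$ are sails with distinct tips $xx_1$ and $xw$ that share the internal vertex $v$, contradicting Lemma~\ref{lem:sails}(c).

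The main obstacle is the case $d(z) \ge 4$, which requires marrying two otherwise unrelated tools. Here $z \notin V(H)$, since the only $4^+$-vertex of $V(H)$ is $y \ne z$; in particular $z \notin V(C)$, where $C := x\, x_1 \cdots v\, w\, x$. Apply Lemma~\ref{lem:3-cycles} to $C$ (all vertices have degree $\le 3$) to produce two vertices of $C$ with a common neighbour $u \notin V(C)$. Lemma~\ref{lem:sails}(a) rules out two internal vertices of $P$ sharing $u$; and $u \sim x$ is impossible as both neighbours of $x$ lie in $V(C)$. Hence the pair is $\{w, x_i\}$ for some $1 \le i \le \ell - 1$, forcing $u$ to be $w$'s unique neighbour outside $V(C)$, namely $z$. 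Since $d(v) = 3$ we have $i \ne \ell - 1$, so $i \le \ell - 2$, and the path $x_1 x_2 \cdots x_i z$ yields $d_{G - x x_1}(x_1, z) \le \ell - 2$. But Remark~\ref{rem:closest-non3-vtx} applied to the non-$3$-vertex $z$ requires $d_{G - x x_1}(x_1, z) \ge d_{G - x x_1}(x_1, y) = \ell - 1$, the desired contradiction.
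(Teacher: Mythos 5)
Your proof is correct, but it is organized quite differently from the paper's. The paper does not locate $v$ on the sail at all: it applies Lemma~\ref{lem:3-cycles} to the cycle $xPv+vw+wx$ right away, shows that the resulting common neighbour of $w$ and an interior vertex $u$ of $P$ must have degree three (via Remark~\ref{rem:closest-non3-vtx}), observes that a sail with tip $xw$ must then pass through that common neighbour because it cannot pass through $v$, and finally derives a contradiction from a chain of length inequalities comparing that sail with $P$. Your key extra step is pinning down $v$ as the penultimate vertex of $P$ using Remark~\ref{rem:closest-non3-vtx} --- in effect the degree-three analogue of the argument the paper uses to prove Lemma~\ref{lem:windmill:4-23-same--adj-to-axis}. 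Once $v\sim y$ is known, the path $xwvy$ is itself a sail with tip $xw$ whenever $w$ has no $4^+$-neighbour, and Lemma~\ref{lem:sails}(c) finishes immediately; this replaces the paper's inequality chase with a one-line disjointness contradiction. The price is the residual case $d(z)\ge 4$, which you handle with the same two tools the paper leads with (Lemma~\ref{lem:3-cycles} together with the distance remark), but used to exhibit a too-short $x_1$--$z$ path rather than a competing sail. Two minor remarks: your $d(z)=2$ case is subsumed by your $d(z)=3$ argument, since the latter only uses that $w$ has no $4^+$-neighbour; and the appeal to Lemma~\ref{lem:simple-signatures} to exclude $z=y$ could equally be replaced by noting that $xwy$ would then be a shorter sail with tip $xw$, enlarging the complete windmill.
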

\begin{proof}
  Suppose $x$ and $v$ are on the same sail $P$.  Then $xPv+vw+wx$ is a cycle consisting of 2- and 3-vertices only.  By Lemma~\ref{lem:3-cycles}, there must be a
  vertex~$z$ which is a common neighbor of two vertices on the cycle.  By Lemma~\ref{lem:sails}(b), one of these two is~$w$.  Denote the other by~$u$ and note
  that~$u$ is on $xPv$, but $u\ne x,v$.

  Since $v$ has degree 3, $z$ must also have degree 3, because~$P$ is a sail starting in~$x$, and $z$ having degree different from~3 would contradict the minimality
  of the distance from the tip to the end-vertex (by Remark~\ref{rem:closest-non3-vtx}).  Now consider any sail with tip $xw$.  Since~$w$ is of degree 3, it must
  contain either $v$ or~$z$.  But~$v$ cannot be contained in such a sail, because otherwise it would have a non-empty interior intersection with~$P$, contradicting
  Lemma~\ref{lem:sails}(c).  It follows that there is a sail~$Q$ with tip $xw$ containing~$z$ such that
  \begin{equation}\label{star:o238nrsli}\tag{$*$}
    \sabs{wz+zQ} < \sabs{wv+vPy}.
  \end{equation}
  However, the length of~$P$ is at most the length of $xPu+uz+zQ$, and the inequality must be strict, because otherwise the sail $xPu+uz+zQ$ would have a non-empty
  interior intersection with the sail~$Q$.  Thus, it follows that
  \begin{equation}\label{star:o238nrsli2}\tag{$**$}
    \sabs{uPy} < \sabs{uz+zQ}.
  \end{equation}
  Now \eqref{star:o238nrsli} and~\eqref{star:o238nrsli2} together imply that $\sabs{uPy} < 1 + \sabs{zQ} < 1 + \sabs{vPy}$, from which we conclude $\sabs{uPy} <
  \sabs{vPy}$, a contradiction to the fact that~$u$ is between $v$ and~$x$.
\end{proof}

The remaining cases are more complex.  We start with the following fact.

\begin{lemma}\label{lem:windmill:*-23-other--adj-to-axis}
  Let $w$ be an $H$-flag with signature $(*\mid 2,3,*)$,
  so that there are $x,v\in V(H)$ with $d(x) = 2$, $d(v) = 3$, and $x\sim w \sim v$.  If $x$ and $v$ are not in the same sail of~$H$, then $v$ is adjacent to the
  axis~$y$ of~$H$.
\end{lemma}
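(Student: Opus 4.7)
The plan is to argue by contradiction: assume $v \not\sim y$ and exhibit an internally shortest 3-path with both endpoints of degree~2, contradicting Remark~\ref{rem:2-2-path}. To fix notation, let $P = x_0 x_1 \cdots x_\ell$ be the unique sail of $H$ starting at $x$ (unique because $x$ has degree~2 and its only non-$H$ neighbor is $w$), and let $P' = x'_0 x'_1 \cdots x'_{\ell'}$ be the sail of $H$ containing $v$ as an interior vertex; write $v = x'_j$, so that the assumption $v \not\sim y$ becomes $1 \le j \le \ell' - 2$. Observe that $x \ne x'_0$, for otherwise $x$ would have the three distinct neighbors $x_1, x'_1, w$. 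Moreover, Lemma~\ref{lem:simple-signatures}\,(\ref{lem:simple-signatures:2-2}) rules out $d(w) = 2$, so $d(w) \ge 3$.

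First I would rule out $d(w) \ge 4$. Consider the path $x'_1 x'_2 \cdots x'_j w$ of length $j$, which lies in $G - x'_0 x'_1$ and has distinct vertices since $w \notin V(H)$. If $d(w) \ge 4$, then $w$ is a non-3-vertex, and Remark~\ref{rem:closest-non3-vtx} applied to the sail $P'$ gives
\[
\ell' - 1 \;=\; d_{G - x'_0 x'_1}(x'_1, y) \;\le\; d_{G - x'_0 x'_1}(x'_1, w) \;\le\; j \;\le\; \ell' - 2,
\]
a contradiction. Hence $d(w) = 3$.

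Next I would extend the detour by one further edge, producing the path $x'_1 x'_2 \cdots x'_j w x$ of length $j+1$ in $G - x'_0 x'_1$ (its vertices are pairwise distinct because $d(x) = 2$ differs from the interior degrees on $P'$, $x \ne x'_0$ by the above, $x \ne y$, and $w \notin V(H)$). Since $x$ itself is a non-3-vertex, Remark~\ref{rem:closest-non3-vtx} forces $\ell' - 1 \le j + 1$, and combining with $j \le \ell' - 2$ pins down $j = \ell' - 2$; in particular the detour is a shortest path from $x'_1$ to $x$ in $G - x'_0 x'_1$. Prepending the edge $x'_0 x'_1$ then gives
\[
P^\star \;:=\; x'_0 \, x'_1 \cdots x'_{\ell' - 2} \, w \, x,
\]
which is an internally shortest 3-path whose endpoints $x'_0$ and $x$ are both of degree~2 (its interior vertices are the 3-vertices $x'_1, \ldots, x'_{\ell' - 2}, w$). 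This contradicts Remark~\ref{rem:2-2-path}, completing the proof.

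The hard part is picking the right ``target'' for Remark~\ref{rem:closest-non3-vtx}. Using $w$ itself dispatches $d(w) \ge 4$ immediately, but when $d(w) = 3$ one has to reach all the way to the 2-vertex $x$ on the \emph{other} sail; this only tightens $j$ to $\ell' - 2$, and the payoff lies in recognizing that this is precisely the length needed to turn the detour---once the tip edge $x'_0 x'_1$ is prepended---into an internally shortest 3-path with both endpoints of degree~2, the configuration that Lemma~\ref{lem:232-path} forbids.
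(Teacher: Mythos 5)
Your proof is correct and follows essentially the same route as the paper's: the case $d(w)\ge 4$ is dispatched by Remark~\ref{rem:closest-non3-vtx}, and the case $d(w)=3$ by exhibiting the internally shortest 3-path $x'_0x'_1\cdots x'_{\ell'-2}\,w\,x$ between two 2-vertices, contradicting Remark~\ref{rem:2-2-path} --- you merely verify the shortestness of that path explicitly, which the paper leaves implicit. (One immaterial slip: the reason $x\ne x'_0$ is simply that otherwise $x$ and $v$ would lie on the same sail $P'$, contrary to hypothesis; your ``three distinct neighbours'' justification would collapse, since if $x=x'_0$ then the unique sail starting at $x$ is $P'$ itself and $x_1=x'_1$.)
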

\begin{proof}
  By Lemma~\ref{lem:simple-signatures}(\ref{lem:simple-signatures:2-2}), $w$ has degree at least $3$.  Suppose that $v$ is on the sail~$P'$ of~$H$ with starting
  vertex $x'$, and $v\not\sim y$.  (Note that $x'\ne x$.)

  On one hand, if the degree of $w$ is $3$, then we have an internally shortest path $x'P'v+vw+wx$ ending in a 2-vertex, contradicting Remark~\ref{rem:2-2-path}.  On
  the other hand, if the degree of $w$ is $4^+$, then $x'P'v+vw$ is a path shorter than~$P'$, but ends in a vertex not of degree $3$, contradicting
  Remark~\ref{rem:closest-non3-vtx}.
\end{proof}

\begin{lemma}
  No flag can have signature $(3\mid *)$.
\end{lemma}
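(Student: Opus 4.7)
The plan is to suppose for contradiction that $w$ is an $H$-flag with $d(w)=3$. By Lemma~\ref{lem:simple-signatures} the signatures $(3\mid 2,2,*)$, $(*\mid 3,3,*)$, $(*\mid 3,4^+,*)$ and $(3\mid 2,4^+,*)$ are excluded, and $(3\mid 4^+,4^+)$ is impossible since $y$ is the unique $4^+$-vertex of~$H$. Hence~$w$ has exactly two neighbours in~$H$, with signature $(3\mid 2,3)$: let $x,v\in V(H)$ with $d(x)=2$, $d(v)=3$ and $x\sim w\sim v$, and let $v''$ denote the third neighbour of~$w$ in~$G$, which lies outside~$H$ by the same signature analysis (any three-entry signature would again fall under one of the excluded cases). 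Lemma~\ref{lem:windmill:3-23--only-other-sail} places $x$ and $v$ on distinct sails $P, P'$ of~$H$, and Lemma~\ref{lem:windmill:*-23-other--adj-to-axis} gives $v\sim y$, so $v=x'_{\ell'-1}$ is the penultimate vertex of $P'=x'_0x'_1\cdots x'_{\ell'}=y$ (with $\ell'\ge 2$).

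The central observation is that $xwvy$ is an internally shortest $3$-path with tip~$xw$ of length~$3$: its internal vertices $w,v$ have degree~$3$, and $wvy$ is a shortest path from~$w$ to~$y$ in $G-xw$ since $w\not\sim y$ (otherwise $w$ would have signature $(3\mid 2,4^+,*)$, excluded). So a sail with tip~$xw$ exists and has length at most~$3$. A length-$2$ sail with tip~$xw$ would have to be of the form $xwz$ for some $z\in N(w)\setminus\{x\}=\{v,v''\}$ of degree $\ge 4$; since $d(v)=3$, this would force $d(v'')\ge 4$. If instead $d(v'')\le 3$, then $xwvy$ itself is a sail of length~$3$ with tip~$xw$. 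Applying Lemma~\ref{lem:internally-disjoint}(c) to the pair of sails $xwvy$ and $P'$, which have distinct tips, would force them to be internally disjoint, but they share the internal vertex~$v$---contradiction.

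It thus remains to rule out the case $d(v'')\ge 4$, which is the main obstacle: here the unique sail with tip~$xw$ is $xwv''$ of length~$2$, ending outside~$H$, and Lemma~\ref{lem:internally-disjoint}(c) no longer applies directly to yield a contradiction. My plan here is to consider the path $Q=x'_0x'_1\cdots x'_{\ell'-1}wx$ from $x'_0$ to~$x$---both $2$-vertices, with all internal vertices of degree~$3$---and shorten it iteratively in the spirit of the proof of Lemma~\ref{lem:windmill:3-23--only-other-sail}. At each step, Lemma~\ref{lem:232-path} yields a common neighbour~$z$ of two vertices of the current path; the girth condition forces these two vertices to lie at distance at least~$3$ apart on the path (else a short cycle results); and Remark~\ref{rem:closest-non3-vtx} applied to the sails $P,P'$ forces $d(z)=3$. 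Replacing the sub-path between the two vertices by the length-$2$ detour through~$z$ produces a strictly shorter $2$-vertex-ended path with $3$-vertex interior; iterating terminates at a path of length~$2$, where any common neighbour promised by Lemma~\ref{lem:232-path} creates a cycle of length at most~$4$, contradicting girth~$\ge 5$. Verifying at each iteration that the common neighbour has degree~$3$---so that the iteration stays within the class of $2$-vertex-ended paths with $3$-vertex interior---is the main technical step.
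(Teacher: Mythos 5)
Your reduction to the signature $(3\mid 2,3)$ and your treatment of the case $d(v'')\le 3$ are correct; the latter is in fact a nice alternative to the paper's argument: you observe that $xwvy$ is then itself a sail with tip $xw$, and internal disjointness of sails with distinct tips (Lemma~\ref{lem:sails}(c)) finishes immediately, since $v$ is interior to both $xwvy$ and $P'$. The paper does not split on the third neighbour $v''$ at all.

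The case $d(v'')\ge 4$, however, is only a plan, and the plan has a real hole. Your iteration must stay inside the class of paths between two $2$-vertices whose internal vertices all have degree~$3$, so at every round you must show that the common neighbour $z$ supplied by Lemma~\ref{lem:232-path} has degree~$3$. You propose to get this from Remark~\ref{rem:closest-non3-vtx} ``applied to the sails $P,P'$'', but already after the first replacement the current path contains the detour vertex $z_1$, which lies on no sail; the remark then says nothing about common neighbours attaching at or beyond $z_1$, and the distances you would need to compare against $\ell'$ are no longer distances along $P'$. This is exactly the step you yourself flag as ``the main technical step'', i.e., it is missing. The paper closes this case with no iteration: it applies Lemma~\ref{lem:232-path} once to $Q=x'P'v+vw+wx$, uses the flag-signature lemmas to force one of the two vertices sharing the neighbour $z$ to be $w$ (so the other, $u$, lies on $x'P'v$ at distance at least three from $w$ along $Q$, by the girth), and then concludes: if $d(z)\ne 3$, the path $x'Qu+uz$ shows a vertex of degree $\ne 3$ strictly closer to the tip of $P'$ than $y$, contradicting Remark~\ref{rem:closest-non3-vtx}; if $d(z)=3$, the path $x'Qu+uz+zw+wx$ runs between two $2$-vertices through $3$-vertices only and is at most as long as $P'$, again contradicting the minimality encoded in Remark~\ref{rem:closest-non3-vtx}. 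Replacing your iteration by this one-shot argument also makes the case split on $d(v'')$ unnecessary.
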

\begin{proof}
  The cases $(3\mid 2,2)$, $(3\mid 4^+,*)$ and $(3\mid 3,3)$ are dealt with in Lemma~\ref{lem:simple-signatures}.

  Let us consider $(3\mid 2,3)$.  By Lemmas~\ref{lem:windmill:3-23--only-other-sail} and~\ref{lem:windmill:*-23-other--adj-to-axis}, denoting the flag by~$w$, the
  2-vertex by~$x$, the 3-vertex by~$v$, and by~$x'$ the starting vertex of the sail~$P'$ containing~$v$, the start and end vertices of the path $Q := x'P'v+vw+wx$
  have degree~$2$.  By Lemma~\ref{lem:232-path}, there is a vertex~$z$ which is a common neighbor of two vertices on~$Q$.  By
  Lemma~\ref{lem:simple-signatures}(\ref{lem:simple-signatures:*33}), one of the two must by~$w$.  Denote the other one by~$u$.

  If~$z$ has degree~$4$ or more, then the length of $x'Qu +uz$ is shorter than that of~$P'$, contradicting Remark~\ref{rem:closest-non3-vtx}.
  
  If~$z$ has degree~$3$, then the length of $x'Qu+uz+zw+wx$ is at most the length of~$P'$ (because $G$ has no $C_3$ or $C_4$), contradicting
  Remark~\ref{rem:closest-non3-vtx}.
\end{proof}

\begin{lemma}\label{lem:windmill:4-23-same--adj-to-axis}
  Let $w$ be an $H$-flag with signature $(4^+\mid 2,3,*)$,
  so that there are $x,v\in V(H)$ with $d(w)=4^+$, $d(x) = 2$, $d(v) = 3$, and $x\sim w \sim v$.  If $x$ and $v$ are on the same sail of~$H$, then $v$ is
  adjacent to the axis $y$ of~$H$.
\end{lemma}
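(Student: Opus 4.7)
The plan is to argue by contradiction, exploiting the optimality of the sail's length built into Remark~\ref{rem:closest-non3-vtx}. Let $P = x_0 x_1 \cdots x_\ell$ denote the common sail, so that $x = x_0$ (the starting 2-vertex) and $y = x_\ell$ (the axis). Since $d(v) = 3$, the vertex $v$ must be an internal vertex $x_j$ with $1 \le j \le \ell - 1$. Moreover, the three neighbors of $v$ in $G$ are $x_{j-1}$, $x_{j+1}$, and $w$, so $v \sim y$ if and only if $j = \ell - 1$. It thus suffices to derive a contradiction from the assumption $j \le \ell - 2$.

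Assuming $j \le \ell - 2$ and setting $e := x_0 x_1$, I would consider the walk $x_1 x_2 \cdots x_j w$. Its vertices $x_1, \dots, x_j, w$ are distinct (the $x_i$'s are distinct along $P$, and $w \ne x_i$ for every $i$ because $w \notin V(H)$), so it is a simple path; its edges are edges of $P$ together with $x_j w = v w$, none of them equal to $e$; and its length is $j$. Hence $d_{G - e}(x_1, w) \le j$.

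On the other hand, because $d(w) \ge 4 \ne 3$, Remark~\ref{rem:closest-non3-vtx} yields $d_{G - e}(x_1, w) \ge d_{G - e}(x_1, x_\ell)$, which by the definition of a sail equals $\ell - 1$. Combining the two inequalities gives $\ell - 1 \le j$, contradicting $j \le \ell - 2$.

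I do not foresee any substantial obstacle: the argument is a one-step application of the ``closest non-3-vertex'' optimality in Remark~\ref{rem:closest-non3-vtx}, with the hypothesis $d(w) \ge 4$ serving precisely to make $w$ an eligible competitor to $x_\ell$. This is where the proof diverges from the degree-3 case treated in Lemma~\ref{lem:windmill:3-23--only-other-sail}, which required a more elaborate recursive argument because a 3-vertex flag would not qualify under that remark.
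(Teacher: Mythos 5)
Your proof is correct and is essentially the paper's own argument: the paper likewise observes that if $v\not\sim y$, then the path from the tip along $P$ to $v$ and then to $w$ is shorter than $P$ and ends in a vertex of degree $\ne 3$, contradicting Remark~\ref{rem:closest-non3-vtx}. Your version merely makes the index bookkeeping and the application of the remark explicit.
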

\begin{proof}
  Denote the sail containing both $x$ and~$v$ by~$P$.
  If $v$ is not adjacent to the axis~$y$, then $xPv+vw$ is a path shorter than $P$ that ends in a vertex not of degree $3$, contradicting
  Remark~\ref{rem:closest-non3-vtx}.
\end{proof}


We conclude the subsection with the following important consequence of our investigation of flags.  Per se, windmills are subgraphs of~$G$ induced by sails, but the
next lemma shows that in a complete windmill, every edge already belongs to some sail.
		
\begin{lemma}\label{lem:windmills:no-x-edges}
  All edges of a complete windmill are on sails.
\end{lemma}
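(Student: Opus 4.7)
Suppose for contradiction that $e = uv \in E(H)$ does not lie on any sail of the complete windmill $H$ with axis $y$. Since $V(H)$ consists of the axis $y$ (a $4^+$-vertex), starting vertices of sails (2-vertices), and interior sail vertices (3-vertices), it suffices to verify that $e$ must be a sail edge in each combination of endpoint-types. I proceed in the order: axis-incident, starting-starting, starting-interior, interior-interior on a single sail, and interior-interior across two distinct sails.

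The axis-incident cases are quick. If $u = y$ and $v = x_i$ is interior of a sail $P = x_0 x_1 \cdots x_\ell$ at position $i \le \ell - 2$, the path $x_1 \cdots x_i y$ routes from $x_1$ to $y$ in $G - x_0 x_1$ in only $i < \ell - 1$ steps, contradicting that $x_1 P x_\ell$ is a shortest path in $G - x_0 x_1$; hence $i = \ell - 1$ and $e$ is already the last edge of $P$. If $v$ is a starting vertex, then $vy$ is itself a single-edge sail in $G$ whose tip is distinct from every other sail in $H$; completeness of $H$ combined with the axis-degree condition $d(y) \in \{\max(4, k), k + 1\}$ (which, in the extreme sub-case $d(y) = k$ with $k \ge 4$, forces every edge at $y$ to already be the last edge of some sail of $H$) ensures that $vy$ belongs to $H$'s chosen family.

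The starting-incident cases use Lemma~\ref{lem:degree-1} and Lemma~\ref{lem:sails}(b). Two starting vertices cannot be adjacent, as they are 2-vertices. If $u = x_0$ is a starting vertex of a sail $P \in H$ and $v$ is interior of a different sail $P' \in H$, with $u \ne x'_0$, then $u$ is external to the combined path $Q = x_1 \cdots x_{\ell - 1} y x'_{\ell' - 1} \cdots x'_1$ of Lemma~\ref{lem:sails}(b), yet both neighbors $x_1$ and $v$ of $u$ lie on $Q$, contradicting that lemma. In the corner case where $u$ is a shared starting vertex of $P$ and $P'$, the only two neighbors of $u$ are the (distinct) second vertices of $P$ and $P'$, so $e$ is automatically a sail edge. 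For an interior-interior chord $x_i x_j$ on a single sail $P$ with $|j - i| \ge 2$, the shortcut $x_1 \cdots x_i x_j \cdots x_\ell$ in $G - x_0 x_1$ has length $\ell - |j - i| \le \ell - 2$, again violating sail minimality.

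The main obstacle is the remaining case: an interior-interior edge $e = uv$ with $u = x_i \in V(P)$ and $v = x'_j \in V(P')$ for two distinct sails $P, P' \in H$. The plan is to form the path $R = x_0 x_1 \cdots x_i v x'_{j - 1} \cdots x'_0$, which has 2-vertex endpoints and 3-vertex internal vertices (or, if the two starting vertices coincide, the corresponding cycle, handled via Lemma~\ref{lem:3-cycles}), to apply Lemma~\ref{lem:232-path} to produce a vertex $w \notin V(R)$ with two neighbors on $R$, and then to conclude by case analysis on where those two neighbors lie: when both lie on the $P$-part of $R$ (respectively, the $P'$-part), Lemma~\ref{lem:sails}(a) applied to $P$ (respectively, $P'$) is contradicted; when they straddle $v$, Lemma~\ref{lem:sails}(b) applied to the pair $P, P'$ is contradicted. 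The delicate point is verifying that the $w$ supplied by Lemma~\ref{lem:232-path} is external to the sails themselves and not merely to $R$, so that the flag-prohibition lemmas genuinely apply; this is where the hypothesis of girth $\ge 5$ is used most heavily.
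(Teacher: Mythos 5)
Your reduction to endpoint types and your treatment of the easy cases (axis--interior shortcuts, two adjacent 2-vertices, a starting vertex adjacent to the interior of another sail via Lemma~\ref{lem:sails}(b), chords inside one sail) are sound, and your set-up of the main case is the same as the paper's: form the tip-to-tip path $R$ through $e$, and apply Lemma~\ref{lem:232-path} (or Lemma~\ref{lem:3-cycles} when the starting vertices coincide) to obtain a vertex $w$ with two neighbours $y_1,y_2$ on $R$.

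The concluding case analysis, however, does not close, and the gap is not the one you flag. Lemma~\ref{lem:sails}(a) does \emph{not} forbid a vertex from being adjacent to two vertices of a sail $P$; it only asserts that one of the two must be the starting vertex $x_0$. Similarly, the path in Lemma~\ref{lem:sails}(b) explicitly omits the starting vertices $x_0$ and $x'_0$. Consequently, the configuration in which $w$ is adjacent to a tip 2-vertex of $R$ and to one further vertex of $R$ (a 3-vertex, another 2-vertex, or the axis) is compatible with both parts of Lemma~\ref{lem:sails}, and none of your three cases eliminates it. This is precisely the hard part of the paper's argument: one needs Lemma~\ref{lem:simple-signatures}(\ref{lem:simple-signatures:*33}) to force at least one $y_i$ to be a 2-vertex; Lemmas~\ref{lem:windmill:*-23-other--adj-to-axis} and~\ref{lem:windmill:4-23-same--adj-to-axis} to kill the mixed case where the other $y_i$ is a 3-vertex (by forcing that 3-vertex to be adjacent to the axis, which is impossible on $R$); and then, once both $y_i$ are known to be the two tips, a \emph{second} application of Lemma~\ref{lem:3-cycles} to the cycle $R+x_0w+wx'_0$ to produce another vertex $w'$ adjacent to two 3-vertices, which finally contradicts Lemma~\ref{lem:simple-signatures}(\ref{lem:simple-signatures:*33}). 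None of this flag machinery appears in your plan, so the central case remains open. (The issue you single out as delicate --- whether $w$ is external to the sails rather than merely to $R$ --- is real but secondary; the essential obstruction is that $w$ may legitimately be adjacent to the tips, and an entirely different set of lemmas is needed to handle that.)
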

\begin{proof}
  To argue by contradiction, consider an edge $e=uv$ whose vertices are on a windmill, but which is not on a sail.  By Lemma~\ref{lem:acfg}, vertices $u$ and $v$
  cannot be both 2-vertices.  By Lemma~\ref{lem:sails}, 
  vertices $u$ and $v$ cannot be both 3-vertices on the same sail.  The same lemma, shows a 3-vertex and the axis cannot be the endpoints of $e$.  Now let $u$ be a
  2-vertex and $v$ be a 3-vertex.  Vertices $u$ and $v$ can either be in the same sail or not.  The former contradicts Lemma~\ref{lem:3-cycles} and
  Lemma~\ref{lem:sails}.  For the latter case, let $P$ be a sail which contains a 3-vertex~$v$ which is adjacent to a 2-vertex~$u$ on another sail.  Let $x$ be the
  2-vertex of the sail $P$.  Then $xPv+vu$ is an internally shortest path, contradicting Lemma~\ref{lem:sails}.

  The only remaining case is when $u_1$ and $u_2$ are two 3-vertices on distinct sails $P_1$, $P_2$, of the windmill.  Say $u_1 \in P_1$ and $u_2 \in P_2$, and that
  the starting vertex of~$P_i$ is~$x_i$ (note that $x_1=x_2$ is possible).  Denote by $Q := x_1P_1u_1+u_1u_2+u_2Px_2$ the path (or cycle) starting with the tip of
  one of the two sails, taking the edge~$e$, and ending in the tip of the other -- the two starting vertices~$x_i$ of the sails may coincide, in which case~$Q$ is a
  cycle.
  
  By Lemma~\ref{lem:232-path} (or Lemma~\ref{lem:3-cycles} if~$Q$ is a cycle), there must be a vertex $w$ which is adjacent to two vertices $y_1,y_2$ on~$Q$.  The
  vertex~$w$ cannot be the axis as it contradicts Remark~\ref{rem:closest-non3-vtx} or entails that there is a triangle.  Hence, $w$ is a flag.
  
  By Lemma~\ref{lem:simple-signatures}(\ref{lem:simple-signatures:*33}), one of the~$y_i$ must be a 2-vertex, the other may be a 2- or a 3-vertex.  If, say $y_2$ is
  a 3-vertex, then either by Lemma~\ref{lem:windmill:*-23-other--adj-to-axis} or Lemma~\ref{lem:windmill:4-23-same--adj-to-axis},
  $y_2$ must be adjacent to the axis --- a contradiction, since the only vertex on~$Q$ which might be adjacent to the axis has degree at most three, and two
  neighbors on~$Q$, the third is the axis (and~$w$ is not the axis).
  
  Hence, we conclude that $y_1$ and $y_2$ are both 2-vertices (in particular, $x_1\ne x_2$).  Since, by what we have just said, $w$ is not adjacent to a third vertex
  of~$Q$, by Lemma~\ref{lem:3-cycles}, there is another vertex $w'$ which is adjacent to two 3-vertices on~$Q$.  But such a vertex would be a flag with
  signature~$(*\mid3,3,*)$, which is impossible by Lemma~\ref{lem:simple-signatures}(\ref{lem:simple-signatures:*33}).
\end{proof}

\subsection{The flag graph} 

We have narrowed down the possible configurations involving flags of windmills.  To summarize the results above, a flag can be adjacent to
\begin{itemize}
\item several 2-vertices on the tips of sails,
\item and at most one of the following:
  \begin{itemize}
  \item the axis, or
  \item one 3-vertex which is adjacent to the axis on a sail.
  \end{itemize}
\end{itemize}
Moreover,
\begin{itemize}
\item only one flag can be adjacent to the axis,
\item every 3-vertex as above can be adjacent to at most one flag (obviously),
\item there are no edges except those in the sails of the windmill or incident to the flags.
\end{itemize}

This structure can be nicely dealt with in an inductive manner (rather than delving into a humongous list of case distinctions).  In the remainder of this section,
we show how the structure of flags on windmills can be modelled by a directed graph which we call \textit{flag graph}, which has a tree-like structure.
The possibility of a flag which is adjacent to the axis is a complication.  Such a flag, if existent, is omitted from the construction of the flag graph.


Let~$W$ be a complete windmill contained in~$G$.  A flag which is adjacent to the axis of~$W$ is called \textit{irregular} (recall that there can be at most one);
the other flags are called \textit{regular.}
It is important to realize that a sail whose tip is adjacent to an irregular flag has length at least~3, because the girth of~$G$ is at least~5.

The flag graph $F = F(W,G)$ of a windmill~$W$ is a directed bipartite graph.  One side of the bipartition of the vertex set of~$F$ comprises the
\textit{flag-vertices,} which are in one-to-one correspondence with the regular flags of~$F$.  The other side of the bipartition consists of the
\textit{sail-vertices,} which are in one-to-one correspondence with the 2-vertices at the tips of the sails of~$W$.  We say that a sail-vertex which corresponds to a
2-vertex contained in two sails is \textit{degenerate;} a sail-vertex corresponding to a 2-vertex contained in only one sail is called \textit{non-degenerate.}
There are two types of arcs:
\begin{itemize}
\item[\it 2-arc] Whenever a regular flag~$w$ is adjacent to a 2-vertex on the tip of a sail~$P$ of~$W$, we have an arc from the sail-vertex corresponding
  to~$P$ to the flag-vertex corresponding to~$w$.  Note that, in this case, the sail-vertex is non-degenerate.
\item[\it 3-arc] Whenever a regular flag~$w$ is adjacent to a 3-vertex~$x$ in~$W$, then there is an arc from the flag-vertex corresponding to~$w$ to the sail-vertex
  which represents the sail containing~$x$.
\end{itemize}
The flag graph may contain anti-parallel arcs: A regular flag might be adjacent to a 2- and a 3-vertex of the same sail.

We note the following observations which follow directly from the construction and the earlier results of this section (see the summary above).

\begin{lemma}\label{lem:windmills:flag-graph-properties}
  Let~$G$ contain a windmill~$W$, and let~$F=F(W,G)$ be its flag graph.
  \begin{enumerate}[(a)]
  \item Degenerate sail-vertices have out-degree~0; non-degenerate sail-vertices have out-degree at most~1.
  \item Flag-vertices have out-degree at most~1.
  \item Degenerate sail-vertices have in-degree at most~2; non-degenerate sail-vertices have in-degree at most~1.
  \item Flag-vertices have in-degree at least~1.
  \end{enumerate}
  Moreover, the undirected connected components of~$F$ are in one-to-one correspondence with the blocks in a block-decomposition of~$H$. 
In particular, only non-degenerate sail-vertices can be contained in directed cycles.
\qed
\end{lemma}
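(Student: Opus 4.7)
The plan is to verify each of (a)--(d) by direct inspection, appealing throughout to the bullet-point summary of flag adjacencies assembled immediately above the construction of $F$. For (a), I would recall that an out-arc at a sail-vertex is a 2-arc, whose existence demands that the corresponding 2-vertex $x$ at the tip have a regular flag as its unique non-sail neighbor. If the sail-vertex is degenerate, both $G$-neighbors of $x$ already lie on the two sails starting at $x$, so $x$ has no outside neighbor at all and the out-degree is $0$. If the sail-vertex is non-degenerate, $x$ has a single non-sail neighbor which may or may not be a regular flag, yielding out-degree at most $1$. For (b), the out-arcs at a flag-vertex $w$ are 3-arcs, one per 3-vertex of $W$ adjacent to $w$; by the summary, $w$ has at most one such 3-vertex neighbour, so the out-degree is at most $1$.

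For (c), the in-arcs at a sail-vertex are 3-arcs from flags adjacent to 3-vertices lying in one of the sails associated with the sail-vertex. The summary tells us that within a given sail only the 3-vertex adjacent to the axis can be adjacent to a flag, and such a 3-vertex has at most one flag neighbour. A non-degenerate sail-vertex encodes exactly one sail, contributing in-degree at most $1$; a degenerate sail-vertex encodes two sails, contributing in-degree at most $2$. For (d), every flag has at least two neighbours in $W$ by definition; Lemma~\ref{lem:simple-signatures}(\ref{lem:simple-signatures:*33}) allows at most one of these neighbours to be a 3-vertex, and regularity of the flag excludes the axis as a neighbour, so at least one neighbour must be a 2-vertex at the tip of some sail, producing an incoming 2-arc.

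The ``moreover'' statement is a structural consequence of (a)--(d). The ``in particular'' clause is immediate: a degenerate sail-vertex has out-degree $0$ by (a), so no directed cycle can pass through it. For the correspondence between undirected components of $F$ and blocks of the windmill-plus-flags subgraph, I would observe that the only vertices shared between distinct sails of $W$ are the degenerate starting 2-vertices, and that flags (by the summary) cannot create additional adjacencies across such a degenerate vertex; so these degenerate vertices are precisely the articulation points separating the blocks of the augmented graph, and collapsing them in $F$ merges exactly those bits that correspond to different blocks. The only obstacle I anticipate is purely bookkeeping: making sure that the dual role of a degenerate sail-vertex (representing two sails at once) is handled consistently when counting arcs and interpreting the block structure; once that is straight, every claim reduces to reading off the appropriate item from the preceding summary.
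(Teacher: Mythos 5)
Your verification is correct and matches the paper's (unwritten) argument: the paper states this lemma with a \qed, asserting that all items "follow directly from the construction and the earlier results of this section," and your case-by-case reading of the 2-arc/3-arc definitions against the flag summary is exactly that intended direct check. The one point worth noting is that the "in particular" clause should be read as restricting which \emph{sail}-vertices can lie on directed cycles (any directed cycle in the bipartite graph $F$ necessarily alternates with flag-vertices), which is how you have treated it.
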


We now show how the flag graph can be constructed inductively from basic elements and construction rules.  
\begin{enumerate}[$\bullet$]
\item\makebox[17ex][l]{\bf Basic element~S}       A single non-degenerate sail-vertex.
\item\makebox[17ex][l]{\bf Basic element~S$_-$}   A single degenerate sail-vertex.
\item\makebox[17ex][l]{\bf Basic element~S$^+$}   A flag-vertex and a sail-vertex with an arc from the sail-vertex to the flag-vertex.
\item\makebox[17ex][l]{\bf Basic element~C2}      A cycle of length~2.
\item\makebox[17ex][l]{\bf Basic element~C4}      A cycle of even length at least~4.
\end{enumerate}
If~$F$ is a flag graph, it can be extended with the following construction rules:
\begin{enumerate}[$\bullet$]
\item\makebox[20ex][l]{\bf Construction rule~U} Start a new connected component (in the undirected sense) by adding one of the basic elements without connection
  to~$F$.
\item\makebox[20ex][l]{\bf Construction rule~A} Add a sail-vertex with an outgoing 2-arc linking it to an arbitrary flag-vertex of~$F$.
\item\makebox[20ex][l]{\bf Construction rule~B} Add a flag-vertex and a sail-vertex, together with a 2- and a 3-arc: the 3-arc goes from the new flag-vertex to an
  arbitrary sail-vertex in~$F$, and the 2-arc goes from the new sail-vertex to the new flag-vertex.
\end{enumerate}


\begin{lemma}\label{lem:windmills:flag-graph-inductive}
  Let~$G$ contain a windmill~$W$, and let~$F=F(W,G)$ be its flag graph.  Then~$F$ can be constructed using the above basic elements and construction rules.
\end{lemma}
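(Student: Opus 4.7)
The plan is to proceed by induction on $|V(F)|$, with $F = \emptyset$ as the trivial base case.  In each inductive step I aim to identify either a connected component of $F$ that is itself a basic element (pulled off with rule~U) or a small ``pendant'' substructure that was added by rule~A or rule~B.

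Pick an undirected connected component $C$ of $F$.  When $|V(C)| = 1$ the vertex must be a sail-vertex, since Lemma~\ref{lem:windmills:flag-graph-properties}(d) forbids isolated flag-vertices; this is the basic element~S or~S$_-$, obtained by rule~U.  When $|V(C)| = 2$ and the unique arc is a 2-arc from a sail-vertex to a flag-vertex, $C$ is the basic element~S$^+$.  Otherwise the pivotal observation is a double count: assuming every sail-vertex of $C$ has in-degree at least~$1$, parts (a)--(d) of Lemma~\ref{lem:windmills:flag-graph-properties} yield the chain
\[
n_{2\text{-arcs}} \le \#\{\text{non-degenerate sails}\} \le \#\{\text{sails}\} \le n_{3\text{-arcs}} \le \#\{\text{flags}\} \le n_{2\text{-arcs}},
\]
forcing equality throughout.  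So $C$ has no degenerate sail-vertex, every vertex has in- and out-degree exactly~$1$, and $C$ is a directed cycle of even length---one of the basic elements~C2 or~C4, peeled off via rule~U.

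Assume now we are outside these three ``seed'' cases.  The counting above fails, so some sail-vertex $s \in V(C)$ has in-degree~$0$; since $C$ has more than one vertex, Lemma~\ref{lem:windmills:flag-graph-properties}(a) forces $s$ to have out-degree exactly~$1$, say via the 2-arc $s \to f$.  If $f$ has in-degree at least~$2$, then removing $s$ preserves the flag-vertex in-degree lower bound, the induction hypothesis constructs $F \setminus \{s\}$, and rule~A re-appends $s$ attached to $f$.  If instead $f$ has in-degree exactly~$1$, then by connectedness of $C$ and $|V(C)| \ge 3$ the vertex $f$ must have out-degree~$1$, via a 3-arc $f \to t$; removing $\{s, f\}$ only lowers the in-degree of $t$ by one, the induction hypothesis applies to $F \setminus \{s, f\}$, and rule~B anchored at $t$ reattaches the pair.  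The main obstacle is the counting argument separating ``seed'' cycle components from those built by further applications of rules~A and~B; once that dichotomy is in hand, the remaining reasoning is a straightforward orderly peeling.
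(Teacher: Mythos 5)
Your proof is correct and follows essentially the same inductive peeling as the paper's: dispose of components that are basic elements via rule~U, and otherwise locate a sail-vertex of in-degree~$0$ and out-degree~$1$ and detach it alone (rule~A) or together with its flag neighbour (rule~B) according to that flag's in-degree. The only addition is your double-counting argument showing that a component in which every sail-vertex has in-degree at least~$1$ must be an even directed cycle, hence a basic element --- a step the paper's proof leaves implicit --- and that argument is sound.
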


\begin{proof}
  We show that each connected component of the flag graph can be constructed in an inductive manner as follows.  Let $C$ be a connected component of the flag graph.
  Assume that $C$ cannot be obtained in one step by Construction rule U.  If $C$ has a sail vertex $s$ with out-degree 1 and in-degree 0 whose flag neighbour has
  either out-degree or in-degree at least 2, then construct $C\backslash s$ first and then obtain $C$ by applying Construction rule A (notice that by the definition,
  the flag neighbour of $s$ must be in $C\backslash s$ as $C$ cannot be constructed by Construction rule U).  Otherwise, $C$ has a sail vertex $s$ with out-degree 1
  and in-degree 0 whose flag neighbour $f$ has both out-degree and in-degree 1.  In this case, construct $C-\{s,f\}$ first and then obtain $C$ by applying
  Construction rule B.
\end{proof}

In Section~\ref{sec:no-windmill}, this construction will be used to inductively ``glue together'' the graph induced by a windmill and its flags.  Before we can do
that, in the following section, we explain the gluing operation.

\section{Typed graphs and gluing}\label{sec:gluing}

Let $P$ be a path and~$Q$ a local minimum with value~$\mu$ in~$P$.  We say~$Q$ is an \textit{imin} if $\mu < 0$ or~$Q$ contains no endvertices of~$P$.

\begin{definition}\label{def:typed-graph-decent}
  A \textit{typed graph with types~$\tau$} is a graph together with a mapping $\tau\colon V(G) \to \{0,1,2\}$.  In other words, every vertex has one of three
  possible types: it can be either a type-0, a type-1, or a type-2 vertex.  The figures in this section show graphs with the types of the vertices in square
  brackets.

  A \textit{decent labeling} of a typed graph is a good edge-labeling with the following properties:
  \begin{enumerate}[(a)]
  \item If $P$ is a path between two type-2 vertices, then the length of~$P$ is at least three, and at least one of the following two conditions hold:
    \begin{enumerate}[({a}.1)]
    \item there is an imin on~$P$ such that between each endpoint of~$P$ and this imin, there is an edge with strictly positive label;
    \item there are (at least) two imins.
    \end{enumerate}
  \item If $P$ is a path between between a type-1 vertex~$v$ and a type-2 vertex~$w$, then the length of~$P$ is at least two, and
    \begin{enumerate}[({b}.1)]
    \item there is an imin on~$P$ which does not contain~$v$.
    \end{enumerate}
  \end{enumerate}
  A typed graph is \textit{decent} if it has a decent edge-labeling.
\end{definition}

Note that if a path~$P$ satisfies either of the conditions (a.1) or~(a.2), then~$P$ also satisfies~(b.1).  Moreover, if a typed graph $G$ has no type-2 vertex, then
any good edge-labeling of $G$ is also decent.

For $t\in\{1,2\}$, a path in a typed graph is called \textit{$t$-simple,}\label{text:tsimple} if the type of every interior vertex is strictly less than~$t$.  We
leave it to the reader to convince himself that, in order to verify that a good edge-labeling is decent, it suffices to check 2-simple paths in~(a), and 1-simple
paths in~(b), respectively.


Before we continue discussing typed graphs in general, we discuss several examples which we will need in the remainder of the paper: We describe typed graphs and
define concrete decent edge-labelings on them (where they exist).  The graphs are indeed \emph{rooted graphs} with the root denoted by~$y$ --- this is owed to the
fact that we will later apply them to windmills, with the root corresponding to the axis of the windmill.  Hence, we will discuss multiple versions of some graphs,
with the difference lying only in the location of the root vertex~$y$.


\begin{example}\label{exp:gluing:path-w-flag}\textit{Decent labeling of typed paths ending in a type-2 vertex.}  
  Let~$P$ be a path of length at least two with root vertex~$y$ as shown in Fig.~\ref{fig:gluing:path-w-flag}, and ending in a type-2 vertex~$w$.  Denote the
  vertex adjacent to~$w$ by~$x$.  The root~$y$ is type-0 or type-1, $x$ is type-0, and all remaining vertices of~$P$ are type-1 vertices.  Then~$P$ is a decent typed
  graph.  Fig.~\ref{fig:gluing:path-w-flag} shows a decent edge-labeling~$\phi$.  If~$P$ has length two, then $\phi(wx)= -1$ and $\phi(xy) = \nfrac34$.  If~$P$
  has length at least three, then let $\phi(wx)= -1$, $\phi(v_{n-1}v_n) = {17}/{24}$ and the rest of the edges have label $+1$.
  \begin{figure}[htp]
    \centering%
    \hspace*{-1em}\scalebox{.5}{\input{typed-path.pstex_t}}%
    \caption{Typed paths with decent edge-labelings}\label{fig:gluing:path-w-flag}%
  \end{figure}
\end{example}

\begin{example}\label{exp:gluing:path-wo-flag}\textit{Decent labeling of typed paths ending in a type-1 vertex.}
  Let~$P$ be a path of length at least three with root~$y$ as shown in Fig.~\ref{fig:gluing:path-wo-flag}.  The type of the root vertex is 0 or 1, and all other
  vertices have type 1.  Then~$P$ is a decent typed graph.  Fig.~\ref{fig:gluing:path-w-flag} shows a decent edge-labeling.  The additional edges (dots) have
  label~$+1$.
  \begin{figure}[htp]
    \centering%
    \scalebox{.5}{\input{typed-path-noflag.pstex_t}}%
    \caption{Typed paths with decent edge-labelings}\label{fig:gluing:path-wo-flag}%
  \end{figure}
\end{example}


\begin{example}\label{exp:gluing:degsail}\textit{Decent labelings of cycles without type-2 vertices.}
  Let~$C$ be a cycle on type-0 and type-1 vertices as shown in Fig.~\ref{fig:gluing:degsail}.  Let $u$ be a type-0 vertex and all remaining vertices of~$C$ be
  type-0 or type-1 vertices.  Then~$C$ is a decent typed graph.  Fig.~\ref{fig:gluing:degsail} shows decent edge-labelings.  If~$C$ has length five, then based on
  the position of vertex $y$, we present two different decent labeling for later applications, either of which is a decent edge-labeling of the typed 5-cycle $C$
  independent of the position of $y$.
  \begin{figure}[htp]%
    \centering%
    \scalebox{.5}{\input{degenerate-sail.pstex_t}}\\
    \caption{Typed cycles with decent edge-labelings}\label{fig:gluing:degsail}%
  \end{figure}
\end{example}


\begin{figure}[htp]%
  \centering%
  \scalebox{.5}{\input{fish.pstex_t}}\\
  \scalebox{.5}{\input{wheel-segment.pstex_t}}\\
  \scalebox{.5}{\input{missing-case.pstex_t}}\\
  \caption{Cycles having a type-2 vertex with decent edge-labelings.}\label{fig:gluing:fishAndCycle}%
\end{figure}
\begin{example}\label{exp:gluing:fish}\textit{Decent labelings of cycles with one type-2 vertex, part~I.}
  Consider a typed graph consisting of a cycle and one extra edge with one end on the cycle as shown in the top part of Fig.~\ref{fig:gluing:fishAndCycle}.  The
  root~$y$ is off the cycle and has type 0 or 1.  There is a type~2 vertex on the cycle, and it is adjacent to two type~0 vertices.  The rest of the vertices have
  type~1.  The figure shows a decent labeling.
\end{example}

\begin{example}\label{exp:gluing:alphabetaCycle}\textit{Decent labelings of cycles with one with type-2 vertex, part~II.}
  Consider a typed graph consisting of a cycle and one extra edge with one end on the cycle as shown in the middle part of Fig.~\ref{fig:gluing:fishAndCycle}.  The
  root~$y$ is on the cycle, and has type 0 or 1.  There is a type-2 vertex on the cycle, and it is adjacent to two type-0 vertices.  The rest of the vertices have
  type~1.  For given $\alpha,\beta$ satisfying $\nfrac23 < \alpha < \nfrac34 < \beta$, a decent labeling can be constructed, as shown in the figure.
\end{example}


\textbf{NEW TODO: CHECK!}
\begin{example}\label{exp:gluing:alphaCycle}\textit{Decent labelings of cycles with one with type-2 vertex, part~III.}
  Consider a typed graph consisting of a cycle as shown in the bottom part of Fig.~\ref{fig:gluing:fishAndCycle}.  The root~$y$ is on the cycle, and has type 0 or~1.
  There is a type-2 vertex on the cycle, and it is adjacent to two type-0 vertices.  The rest of the vertices have type~1.  For given $\alpha, \gamma$ satisfying
  $\nfrac23 < \alpha < \nfrac34$ and $\gamma<0$, a decent labeling can be constructed, as shown in the figure.
\end{example}



\paragraph{%
  A \textit{$k$-wheel}%
} is a typed graph~$H$ which is the union of a cycle and a center vertex connected to $k \ge 2$ of the vertices on the cycle, called \textit{anchors.}  The distance
on the cycle of any two anchors must be at least three.  Fix an orientation of the cycle.  A successor of an anchor is called a \textit{bogey;} the successor of a
bogey is called a \textit{spectator;} all other vertices on the cycle are called \textit{boobies.}  A path contained in the cycle connecting successive anchors is
called a \textit{segment.}  The vertices are to have the following types:
\begin{itemize}
\item anchors and spectators have type~0;
\item the bogies have type~2;
\item the boobies have type~1; and
\item the center vertex either type~0 or type~1.
\end{itemize}

We divide the class of wheels into 3 subclasses: Benign wheels, almost evil wheels, and evil wheels.  The first two kinds are decent, while the third is not.


\begin{example}\label{exp:gluing:wheels}\textit{Decent labelings of benign wheels.}  %
  Consider a wheel in which the center is a type-1 vertex but contains at least one pair of consecutive anchors whose distance is at least four (this is the
  ``benign'' segment of the wheel).  These typed graphs are called \textit{benign wheels}.  Fig.~\ref{fig:gluing:wheels} shows decent edge-labelings of benign
  wheels.
  \begin{figure}[htp]%
    \centering%
    \scalebox{.5}{\input{wheel.pstex_t}}\\
    \caption{Benign wheels are decent.}\label{fig:gluing:wheels}%
  \end{figure}  
\end{example}


\begin{example}\label{exp:gluing:almost-evil-wheels}\textit{Decent labeling of almost evil wheels.}  %
  Consider a wheel in which the distance of every pair of consecutive anchors is exactly three and the center is a type-0 vertex.  These typed graphs that are only
  different from evil wheels in the type of the center, are called \textit{almost evil wheels}.  Fig.~\ref{fig:gluing:almost-evil-wheels} shows decent
  edge-labelings of almost evil wheels.
  \begin{figure}[htp]%
    \centering%
    \scalebox{.5}{\input{almost-evil-even-wheel.pstex_t}}\\
    \scalebox{.5}{\input{almost-evil-odd-wheel.pstex_t}}%
    \caption{Almost evil wheels with decent labelings}\label{fig:gluing:almost-evil-wheels}%
  \end{figure}
\end{example}


\begin{example}\label{exp:gluing:evil-wheels}\textit{Evil wheels are not decent.}
  If the distance of every pair of consecutive anchors in a wheel is exactly three and the center is a type-1 vertex, as shown in Fig.~\ref{fig:gluing:evil-wheels},
  then wheel is called an \textit{evil wheel.}  It can be easily seen that evil wheels have a good edge-labeling.  However, they are not decent.
  \begin{figure}[htp]%
    \centering%
    \scalebox{.333333333333}{\input{evil-wheels.pstex_t}}\\
    \caption{Evil wheels are \textit{not} decent.}\label{fig:gluing:evil-wheels}%
  \end{figure}  
\end{example}

\begin{remark}\label{remark:evil-decent}
  It can be seen that if a wheel is not evil then examples \ref{exp:gluing:almost-evil-wheels} and~\ref{exp:gluing:wheels} can yield a decent edge-labeling.  In
  other words, if a wheel is not evil, then it is decent.
\end{remark}


\subsection{Swell subgraphs}

Lemma~\ref{lem:fundamental} below is the fundamental motivation behind defining typed graphs and swell graphs.

\begin{definition}\label{def:swell-graph-decent}
  Let $H$ be a proper subgraph of a graph~$G$.  We say that \textit{$H$ is a swell subgraph of $G$}, if~$H$ is typed with at least one type~0 or type~1 vertex and
  the following properties:
  \begin{enumerate}[(a)]
  \item no type~0 vertex in $H$ has a neighbor in $G-H$;
  \item every type~1 vertex in $H$ has at most one neighbor in $G-H$;
  \item no vertex in~$G-H$ has two or more type~1 vertices of~$H$ as neighbors. 
  \end{enumerate}
\end{definition}

The shaded area in Fig.~\ref{fig:gluing:swell-subgraph} is an example of a swell subgraph.

\begin{figure}[htp]
  \centering
  \scalebox{.5}{\input{swell-subgraph.pstex_t}}
  \caption{A swell subgraph}\label{fig:gluing:swell-subgraph}
\end{figure}

\begin{lemma}\label{lem:fundamental}
  Let $H$ be a decent typed graph.  A critical graph cannot contain $H$ as a swell subgraph.
\end{lemma}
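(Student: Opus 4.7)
The plan is to argue by contradiction. Suppose $G$ is critical and contains $H$ as a decent swell subgraph, and fix a decent labeling $\psi$ of $H$. Since $H$ is a proper subgraph of $G$, the graph $G^{\circ}:=(V(G), E(G)\setminus E(H))$ is a proper subgraph of $G$, and by criticality admits a good labeling $\phi^{\circ}$. After independently rescaling (which preserves goodness and decency), one can arrange that all $\psi$-labels lie in $(-\delta, \delta)$ while all $\phi^{\circ}$-labels lie in $(M, M+\delta)$ for some fixed $M \gg \delta > 0$. Define $\phi\colon E(G)\to \RR$ by setting $\phi(e) := \psi(e)$ for $e \in E(H)$ and $\phi(e) := \phi^{\circ}(e)$ otherwise; the aim is to verify that $\phi$ is a good edge-labeling of $G$, contradicting criticality.

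A cycle contained in $E(H)$ inherits two local minima from the goodness of $\psi$, and one contained in $E(G)\setminus E(H)$ inherits them from $\phi^{\circ}$. The substantive case is that of mixed cycles $C$, which decompose cyclically into alternating internal paths $P_1, \ldots, P_m \subseteq E(H)$ and external paths $Q_1, \ldots, Q_m \subseteq E(G)\setminus E(H)$. By swell property (a), each endpoint of an internal path has type $1$ or $2$ in $H$. Because external-edge labels exceed internal ones by at least $M-\delta$, every local minimum of $C$ is a sub-path of some $P_i$. The crucial observation is that any imin of $P_i$ (in the sense of Definition~\ref{def:typed-graph-decent}) is automatically a local minimum of $C$: when the imin is interior to $P_i$, its $C$-neighbors lie in $P_i$ and are strictly larger by the local-minimum property in $P_i$; when it touches an endpoint of $P_i$, its common value $\mu$ is negative by the imin definition, while the adjacent external edge has label at least $M>0>\mu$.

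For $m \geq 2$, one imin per $P_i$ suffices, and the minimum-label sub-path of each $P_i$ qualifies; the internal paths are vertex-disjoint, yielding the required two local minima of $C$. For $m = 1$, two local minima must be produced on $P_1$ alone, and this is where the fine-tuning of the decent conditions is essential: if both endpoints of $P_1$ are type~2, (a.2) supplies two imins directly, whereas (a.1) supplies one interior imin together with strictly-positive flanking edges, which jointly force a second local minimum of $C$ near an endpoint of $P_1$ (the positive edges are strictly larger than the imin but strictly smaller than the external edges, creating a local-minimum structure near the endpoint via comparison with the external edges). Condition (b.1) provides the analogous second local minimum when one endpoint of $P_1$ is type~1 and one type~2; the remaining configurations of endpoint types are controlled using swell property (c), which forces the external segment $Q_1$ to have length at least two whenever both endpoints are type~1.

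The main obstacle is the analysis of the $m=1$ mixed cycle: the decent conditions are engineered precisely so that each internal path, in conjunction with the much larger external labels, yields two local minima of $C$. The key subtleties are the careful handling of endpoint-touching imins (where the $\mu<0$ disjunct of the imin definition is crucial) and the interplay between the strictly-positive flanking edges of (a.1) or (b.1) and the external-edge labels. This structural robustness of the decent condition---tailored exactly to survive arbitrary external completions---is the content the lemma extracts from Definition~\ref{def:typed-graph-decent}.
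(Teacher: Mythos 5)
Your overall template (combine a decent labeling of $H$ with a good labeling of an edge-complement, separated in scale) is the right one, but you have inverted the scale separation, and this breaks the argument at exactly the point where the fine structure of Definition~\ref{def:typed-graph-decent} is supposed to save you. Concretely, take a cycle $C$ meeting $H$ in a single path $P_1$ between two type-2 vertices on which the decent labeling satisfies only (a.1), say $P_1 = x\,a\,b\,y$ with labels $+\nfrac12,\,-1,\,+\nfrac12$, closed up by an external path $Q_1$ whose labels are, per your construction, all in $(M,M+\delta)$ and, say, monotone along $Q_1$. Then $C$ has exactly one local minimum (the edge $ab$): the flanking edges of value $+\nfrac12\cdot\varepsilon$ are \emph{larger} than the imin on one side, so they are not local minima, and no edge of $Q_1$ is a local minimum either. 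Your sentence claiming that the positive flanking edges ``create a local-minimum structure near the endpoint via comparison with the external edges'' is where the proof fails: a local minimum must be below \emph{both} neighbours, and the neighbour on the imin side is smaller. The same inversion kills the $(m=1)$ case with a type-1 endpoint (condition (b.1) supplies only one imin, and with large external labels nothing supplies a second local minimum), and the case of a path between \emph{two} type-1 endpoints, about which the decency conditions say nothing at all; swell property (c) by itself does not produce a local minimum there under your labeling.

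The paper's construction is the mirror image of yours and is not optional: one deletes the type-0 and type-1 \emph{vertices} of $H$ (not merely the edges of $H$) to get $G'$, scales the decent labeling of $H$ so that all nonzero labels have absolute value at least $2$, scales the good labeling of $G'$ to absolute value at most $1$, and assigns label $-\infty$ to the edges joining type-1 vertices of $H$ to $G-H$. Then in the (a.1) case the two flanking edges of value $\ge 2$ strictly dominate every label on the external arc, which therefore must contain a second local minimum of $C$; and at type-1 endpoints the $-\infty$ edges themselves are local minima, pairwise non-adjacent by swell property (c), which is precisely what handles the type-1/type-1 paths that decency does not constrain. So the gap is genuine: your $m\ge 2$ analysis is fine, but the $m=1$ case --- the one you correctly identify as the crux --- cannot be closed with your choice of scales.
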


In the following lemma, we use the shorthand $-\infty$ to denote a negative number whose absolute value is larger than all other, ``finite'', absolute values.

\begin{proof}[Proof of Lemma~{lem:fundamental}]
  Assume otherwise and let $H$ be a decent typed graph, which is a proper subgraph of a critical graph $G$.  We prove that $G$ has a good edge-labeling.  Define the
  graph~$G'$ by deleting from~$G$ all the type-0 and type-1 vertices of~$H$.  Note that $E(H)\cap E(G') = \emptyset$, by Definition~\ref{def:typed-graph-decent}.
  Since $H$ has at least one type~0 or type~1 vertex, $G'$ has a good edge-labeling.

  Note that the edges in $M := E(G)\setminus (E(H)\cup E(G'))$ are incident to type-1 vertices of~$H$.  Now take a decent labeling of~$H$ and scale it so that all
  nonzero labels have absolute value at least~$2$.  Also, take a good labeling of $G'$ and scale it so that all labels have absolute value at most~$1$.  We combine
  these two to form a labeling of the edges of~$G$, where the edges in $M$ receive the label $-\infty$.
  We prove that this forms a good edge-labeling of $G$.  Consider a cycle~$C$ in $G$.  If $E(C) \subset E(G')$ or $E(C) \subset E(H)$, then $C$ has two local minima.
  
  Otherwise, consider the graph $C \cap H$.  Its connected components are path, at least one of which must have non-zero length, so let~$P$ be such a component.
  Denote the end-vertices of~$P$ by $x,y$.

  Notice that with the above mentioned relabeling, an imin on $P$ is in fact a local minimum on $C$.

  First, assume that both $x$ and $y$ are of type~2.  If $P$ has at least two imins, then those two are in fact two local minima in $C$.  Otherwise $P$ has an imin
  such that between each endpoint of $P$ and this imin, there is an edge with strictly positive label.  Moreover, by the scaling of labels in $H$, these two labels
  have value at least 2.  Considering the scale of the labels in $G'$, there is a local minimum of $C$ that belongs to $G'$.  This local minimum in addition to the
  imin on $P$ are two local minima of $C$.

  If $x$ has type~2 and $y$ has type~1, then the edge~$e$ of~$C\setminus P$ adjacent to~$y$ has label $-\infty$.  Moreover, by the definition of a decent labeling,
  there is an imin on~$P$ which is not incident to~$e$.  By the same argument as above, this imin in~$P$ is a local minimum in~$C$.  Hence, we have two local minima
  on~$C$.

  Finally, if both~$x$ and~$y$ have type~1, let~$e$ and~$f$ be the edges of $C\setminus P$ incident to~$x$ and~$y$, respectively.  By the definition of a swell
  subgraph, $e$ and $f$ cannot be adjacent.  So $e$ and $f$ are local minima of~$C$ as their labels are~$-\infty$,
\end{proof}


\subsection{Gluing}

In order to use decent typed graphs in inductive arguments, we have the following construction which allows to ``glue'' two decent typed graphs and obtain a new one.

\begin{definition}
  Let~$G_1$ and $G_2$ be typed graph with types~$\tau_i$, $i=1,2$, let~$H$ be an induced subgraph of both $G_1$ and~$G_2$, and $V(H) = V(G_1)\cap V(G_2)$.  We say
  that the typed graph~$G$ with types~$\tau$ is the result of \textit{gluing $G_1$ and~$G_2$ along~$H$,} if $V(G) = V(G_1)\cup V(G_2)$, $E(G) = E(G_1)\cup E(G_2)$,
  and
  \begin{equation*}
    \tau(v) =
    \begin{cases}
      \tau_1(v), &\text{ if $v\in V(G_1)\setminus V(G_2)$,}\\
      \tau_2(v), &\text{ if $v\in V(G_2)\setminus V(G_1)$,}\\
      \min(\tau_1(v),\tau_2(v)), &\text{ if $v\in V(G_1)\cap V(G_2)$.}
    \end{cases}
  \end{equation*}
\end{definition}

We wish to have conditions which ensure that if $G_1$ and~$G_2$ are decent, then~$G$ is, too.  As a motivating example, the reader might want to verify the following
fact (which we do not need in this paper):
\begin{lemma}\label{lem:toy-gluing}
  If for all $v\in V(H)$ we have $\tau_1(v) = \tau_2(v) = 2$, and if $G_1$ and $G_2$ are decent, then~$G$ is decent.
\end{lemma}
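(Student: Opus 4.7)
The plan is to define $\phi$ on $G$ by $\phi|_{E(G_1)} := \phi_1$ and $\phi|_{E(G_2)} := \phi_2$, with no rescaling whatsoever.  The first observation is that $E(H) = \emptyset$: every vertex of $V(H)$ is type-2 in both $G_1$ and $G_2$, and condition~(a) of decency forbids adjacency between type-2 vertices (it requires length at least three for any such path).  Hence $E(G) = E(G_1) \sqcup E(G_2)$ and the definition of $\phi$ is unambiguous; by a small perturbation one may further assume all labels of $\phi$ are pairwise distinct.

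Conditions~(a) and~(b) will be the easy part.  Since $\tau(v) = \min(\tau_1(v),\tau_2(v)) = 2$ for every $v \in V(H)$, no 2-simple path in $G$ can have an interior vertex in $V(H)$; combined with the absence of edges between $V(G_1) \setminus V(H)$ and $V(G_2) \setminus V(H)$, this forces each 2-simple path between type-2 vertices to lie entirely in $G_1$ or entirely in $G_2$, so (a) descends from the decency of the respective $G_i$.  The same reasoning handles 1-simple paths and~(b).

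Goodness of mixed cycles will be the main challenge.  A mixed cycle $C$ decomposes at its $V(H)$-vertices into arcs $A_1,\dots,A_m$ with $m \ge 2$, each a path in some $G_i$ between two type-2 vertices, hence of length at least three and satisfying (a.1) or (a.2) by decency of $G_i$.  I would argue by contradiction: if $C$ had at most one local minimum, then (labels being distinct) $C$ would be unimodal with a unique local min $e^\ast$ and a unique local max $e_{\max}$.  The crucial observation is that an arc whose $\phi$-labels are monotone along it cannot be decent --- its only local minimum sits at an endpoint, so neither (a.1) (which demands an \emph{internal} imin) nor (a.2) (which demands \emph{two} imins) can hold.

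When $m \ge 3$, some arc contains neither $e^\ast$ nor $e_{\max}$ and hence has monotone labels, yielding the desired contradiction.  The same works when $m = 2$ and both extrema lie in the same arc.  The delicate case, and the main obstacle, is $m = 2$ with $e^\ast$ internal to $A_1$ and $e_{\max}$ internal to $A_2$: then $A_1$ is V-shaped and, since its only local minimum is the internal $e^\ast$, decency via (a.1) forces a strictly positive edge on each side of $e^\ast$, so --- because the V-shape is monotone away from $e^\ast$ --- both endpoint-adjacent labels of $A_1$ must be strictly positive.  Dually, $A_2$ is inverted-V with no internal local minimum, so decency via (a.2) forces both endpoint-adjacent labels of $A_2$ to be negative imins.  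But unimodality of $C$ implies that at each shared vertex the $A_2$-label strictly exceeds the $A_1$-label, and this sign disagreement (a negative label exceeding a positive one) is the final contradiction.
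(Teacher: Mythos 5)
Your proof is correct. Note first that the paper offers no proof of this lemma at all --- it is stated as ``a motivating example, the reader might want to verify the following fact (which we do not need in this paper)'' --- so there is nothing to compare against line by line; your argument has to stand on its own, and it does. The reduction of conditions (a) and (b) to paths lying wholly inside one $G_i$ is right (the key points being $E(H)=\emptyset$, which you correctly extract from the length-$\ge 3$ clause of (a), and the fact that a $2$-simple path cannot pass through a type-2 vertex of $V(H)$ in its interior). The goodness argument via arcs and unimodality is also sound: a strictly monotone arc has a single local minimum sitting on an endvertex, which kills both (a.1) (no positive edge between that endpoint and the imin) and (a.2); and in the residual $m=2$ case the signs forced by (a.1) on the V-arc and by (a.2) on the inverted-V-arc do clash with the monotonicity of $C$ at the two shared vertices. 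Two small remarks. First, the step ``by a small perturbation one may assume all labels distinct'' is asserted but not justified; it does hold --- each local minimum of $\phi$ survives as the smallest perturbed edge inside it, and the strict inequalities $\mu<0$ and $\phi(e)>0$ appearing in the imin and (a.1) conditions are open --- but since decency is defined via exact equalities of labels along a local minimum, a sentence to this effect is needed. Second, it is worth observing how different your route is from the machinery the paper actually builds for its real gluing steps (Definition~\ref{def:gluable} and Lemmas \ref{lem:glue:1-sum}--\ref{lem:glue:edge}): there the authors must track quantitative thresholds ($\nfrac23$, $\nfrac34$, signs of edges at the root) precisely because the gluing interface contains type-0/1 vertices; your purely structural argument works here only because every interface vertex is type-2, so condition (a) alone is strong enough on both sides of each interface vertex. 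That is exactly why the paper calls this the ``toy'' case.
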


Our aim is to decompose windmills into elementary parts---indeed, all parts we need have been discussed in Examples
\ref{exp:gluing:path-w-flag}--\ref{exp:gluing:alphabetaCycle} and \ref{exp:gluing:almost-evil-wheels}--\ref{exp:gluing:wheels}.  For this, we need a considerably
more powerful gluing mechanism than that of Lemma~\ref{lem:toy-gluing}.  We define the class of ``gluable'' typed graphs, which can be glued to each other by 1- and
2-sum operations.

We need to first classify certain special type-2 vertices.
\begin{definition}\label{def:lock}
  For a given quadruple $(G,\tau,\phi,y)$ consisting of a typed graph $G$ with types~$\tau$, a decent edge labeling~$\phi$ of~$G$, and a root vertex~$y$ of~$G$ we
  say that a type-2 vertex $w$ is \textit{locked} if the distance~$d_G(w,y)$ between~$w$ and~$y$ is two, the (unique) path~$P$ between $w$ and~$y$ of length two has
  an imin, and the edge incident to~$y$ on~$P$ has label in $\lt]\nfrac23,\nfrac34\rt[$.

  We call $P$ the \textit{locking path} of $w$.  If $w$ is not locked, we call it \textit{connectable}.
\end{definition}

Now we are ready to give the complete definition of the gluing operation.

\begin{definition}\label{def:gluable}
  We say that a quadruple $(G,\tau,\phi,y)$ consisting of a typed graph~$G$ with types~$\tau$, a decent edge-labeling~$\phi$ of~$G$, and a root vertex~$y$ of~$G$ is
  \textit{gluable}, if the following conditions hold.
  \begin{enumerate}[(a)]
  \item Every path $y,v_1,v_2$ of length~$2$ originating from~$y$ and containing a type-1 vertex~$v_1$ and a vertex~$v_2$ of type 0 or~1 is \textit{admissible:}
    With $\alpha := \phi(y v_1)$ and $\beta := \phi(v_1v_2)$, we have
    \begin{equation*}
      \nfrac23 < \alpha \le \nfrac34 < \beta.
    \end{equation*}
  \item Every 1-simple path\footnote{%
      Recall the definition of $t$-simple from page~\pageref{text:tsimple}.} %
    of length at least one between a type-1 vertex and~$y$ contains an edge with value at least~$\nfrac23$.
  \item Not type-2 vertex is adjacent to the root~$y$.
  \item Let~$w$ be a type-2 vertex in~$G$.  If the distance~$d_G(w,y)$ between~$w$ and~$y$ is two, then every 2-simple path~$P$ between~$w$ and~$y$
  	except for the locking path of $w$, if it exists, satisfies one of the following:
    \begin{quote}
    \begin{itemize}
    \item[(d2.i)] $P$ has an imin, and the edge incident to~$y$ on~$P$ has label at least~$\nfrac34$; or
    \item[(d2.ii)] The edge incident to~$y$ on~$P$ is a local minimum with value in $\lt]0,\nfrac12\rt]$.
    \end{itemize}
    \end{quote}
    If the distance~$d_G(w,y)$ between $w$ and~$y$ is at least three, then every 2-simple path~$P$ between $w$ and~$y$ satisfies
    \begin{quote}
    \begin{itemize}
    \item[(d3)] The edge incident to~$y$ on~$P$ is a local minimum with value in $\lt[\nfrac23,\nfrac34\rt[$; and there is a second imin of~$P$ between this edge and
      $w$.
    \end{itemize}
    \end{quote}
  \end{enumerate}
\end{definition}


Before we prove that gluable graphs can be glued to each other, we review the examples from the beginning of this section.

\begin{example}
  The typed graphs with the decent labelings and root-vertices~$y$ described in the examples in the previous subsection are all gluable.  Checking this amounts to
  mechanically going through all the $t$-simple paths of the graphs.  We omit it here.
\end{example}


Let us now prove that gluing really works.

\begin{lemma}\label{lem:glue:1-sum}
  Let $(G_1,\tau_1,\phi_1,y)$ and $(G_2,\tau_2,\phi_2,y)$ be gluable, and let~$G$ result from gluing $G_1$ and~$G_2$ along~$\{y\}$.  Moreover, for all $e\in E$, let
  $\phi(e) := \phi_1(e)$, if $e\in E(G_1)$ and $\phi(e) := \phi_2(e)$, otherwise.
  Then $(G,\tau,\phi,y)$ is gluable.
\end{lemma}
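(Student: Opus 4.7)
The plan is to exploit the fact that $y$ is a cut-vertex of $G$: since $V(G_1)\cap V(G_2) = \{y\}$, every simple path in $G$ that leaves $y$ stays in the component it entered, and in particular every cycle and every simple path starting or ending at $y$ lies entirely in one of $G_1, G_2$. The edge sets are disjoint so $\phi$ is well-defined, and the cycle characterization immediately gives that $\phi$ is a good edge-labeling, since every cycle of $G$ sits in some $G_i$ and inherits two local minima from $\phi_i$.

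Next, I would verify that $\phi$ is a decent edge-labeling. A path $P$ relevant to condition~(a) or~(b) of Definition~\ref{def:typed-graph-decent} either lies in some $G_i$ -- in which case decency of $\phi_i$ directly gives the required imins -- or else $P$ crosses the cut-vertex and decomposes as $P = P_1\cdot P_2$ with $P_i\subseteq G_i$ ending at $y$ and each endpoint of $P$ lying in $V(G_i)\setminus\{y\}$. In the crossing case, each endpoint of $P$ is at distance at least two from $y$ (by gluability condition~(c) applied to $G_i$, type-2 vertices are not adjacent to $y$), so I invoke condition~(d) of gluability for each $(G_i,\tau_i,\phi_i,y)$, noting that even in the exceptional locking-path case Definition~\ref{def:lock} still supplies an imin on~$P_i$. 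Then I check that each imin of $P_i$ survives as an imin on the combined path $P$: a strictly interior imin is unaffected, and an imin containing $y$ necessarily has value $\mu<0$ (since for $\mu\ge 0$ the imin cannot contain the endpoint $y$ of $P_i$), and so remains below the label of the edge of $P_{3-i}$ incident to $y$, which is strictly positive by (d2.i), (d2.ii), (d3), or the locking-path definition. Two type-2 endpoints then yield two imins on $P$ (one from each side), giving~(a.2); a type-1/type-2 pair yields an imin on the type-2 side, not containing the type-1 endpoint on the other side, giving~(b.1).

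Finally, the four gluability conditions for $(G,\tau,\phi,y)$ reduce to the corresponding conditions on the pieces via the same cut-vertex observation: any path of length~2 originating from $y$ lies in a single $G_i$, so~(a) is inherited; any 1-simple path between~$y$ and a type-1 vertex $v$ necessarily lies in the~$G_i$ containing~$v$ (since leaving that piece would require revisiting~$y$), so~(b) is inherited; the neighborhood of~$y$ in~$G$ is the disjoint union of its neighborhoods in $G_1$ and $G_2$, so~(c) is inherited; and for a type-2 vertex~$w\in V(G_i)\setminus\{y\}$ every 2-simple $w$--$y$ path lies in $G_i$, so the distance-based case distinction and the clauses (d2.i), (d2.ii), (d3) (and the locking-path exception) all transfer from $(G_i,\tau_i,\phi_i,y)$ to $(G,\tau,\phi,y)$.

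The only genuinely non-routine step is the verification in Step~1 that imins of the two halves survive the concatenation at $y$; the main subtlety there is handling imins that touch the cut-vertex, and this is precisely where condition~(c) of gluability and the positivity of the edge at $y$ guaranteed by condition~(d) do the work. The remainder is bookkeeping enabled by the fact that $y$ is a cut-vertex.
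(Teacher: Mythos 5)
Your overall strategy is the same as the paper's: observe that $y$ is a cut-vertex, so goodness of $\phi$ and the four gluability conditions for $G$ are inherited componentwise, and the only real work is verifying decency for paths that cross $y$, which you decompose as $P_1\cdot P_2$ and attack via condition~(d). The paper does exactly this, but then runs an explicit case analysis over the pairs of clauses (d2.i), (d2.ii), (d3) satisfied by the two halves; you instead try a uniform ``every imin of $P_i$ survives concatenation'' argument, and that is where there is a genuine gap.

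The problem is clause (d2.ii). It asserts only that the edge of $P_i$ incident to $y$ is a local minimum with positive value at most $\nfrac12$; since that edge contains the endpoint $y$ of $P_i$ and has positive value, it is \emph{not} an imin of $P_i$, and (d2.ii) guarantees no imin anywhere on $P_i$. So your premise that condition~(d) ``supplies an imin on $P_i$'' for each half fails, and with it the blanket conclusion that two type-2 endpoints always yield two imins and hence (a.2). Concretely, if both halves satisfy (d2.ii), the two $y$-edges are adjacent, so (since distinct local minima must be vertex-disjoint) they contribute at most \emph{one} imin of $P$ --- the interior local minimum straddling $y$ --- and (a.2) need not hold; one must instead verify (a.1), using that the edges flanking this straddling minimum have strictly positive labels. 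This is precisely the case the paper's proof singles out (``If both $Q_1$ and~$Q_2$ satisfy (d2.ii) then (a.1) holds\dots''). The mixed cases (d2.i)+(d2.ii) and the type-1/type-2 case with (d2.ii) on the type-2 side also need the separate observation that the $y$-edge of the (d2.ii) half, while not an imin of its own half, \emph{becomes} an interior imin of the concatenated path because the adjacent edge on the other side has a strictly larger label. Your treatment of genuine imins touching $y$ (forcing $\mu<0$) and of the locking-path exception is fine --- indeed more explicit than the paper's --- but you need to replace the uniform survival argument by the case distinction on which of (d2.i), (d2.ii), (d3) each half satisfies.
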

The proof is purely mechanical and can be found in the appendix.


\begin{lemma}\label{lem:glue:2-sum}
  Let $(G_1,\tau_1,\phi_1,y_1)$ and $(G_2,\tau_2,\phi_2,y_2)$ be gluable, $w_1$ a connectable type-2 vertex of~$G_1$ and $w_2$ a connectable type-2 vertex of~$G_2$.  Let $G$ be the typed
  graph resulting from identifying $y_1$ with~$y_2$ to~$y$ and $w_1$ with~$w_2$.
  If $G_1$ and $G_2$ are gluable, and $d_{G_1}(y_1,w_1) + d_{G_1}(y_2,w_2) \ge 5$, then $(G,\tau,\phi,y)$ is gluable.
\end{lemma}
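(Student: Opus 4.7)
I plan to verify the four conditions of Definition~\ref{def:gluable} for $(G, \tau, \phi, y)$, where $\phi$ is the combined labeling: $\phi$ equals $\phi_i$ on $E(G_i)$. This is well-defined because condition~(c) of gluability for each $G_i$ rules out an edge $y_iw_i$, so identifying vertices does not create duplicate edges.

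The key structural observation is that $\{y, w\}$ is a vertex cut of $G$ separating $V(G_1) \setminus \{y, w\}$ from $V(G_2) \setminus \{y, w\}$. Three consequences follow: every $y$--$w$ path in $G$ lies entirely in some $G_i$; every 2-simple path from $y$ to a type-2 vertex distinct from $w$ lies in one $G_i$, because any crossing would force $w$ (type-2) to be an interior vertex of a 2-simple path; similarly, every 1-simple path from a type-1 vertex to $y$ lies in one $G_i$. With these, conditions (a), (b), (c), and condition~(d) applied to type-2 vertices other than $w$ reduce immediately to the corresponding conditions for $G_1$ and $G_2$. The decency of $\phi$ is checked similarly: a cycle or path lying in one $G_i$ is handled by decency of $\phi_i$, while for mixed cycles and paths I would locate the required imins on each side separately using the gluability conditions~(d) applied to the portion in each $G_i$, and assemble them into the imins required by Definition~\ref{def:typed-graph-decent}.

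The heart of the proof is condition~(d) for the merged vertex $w$. Any 2-simple $w$--$y$ path in $G$ lies entirely in one $G_i$, so I split by $d_G(w, y) = \min_i d_{G_i}(y_i, w_i)$. If $d_G(w, y) \ge 3$, the hypothesis $d_{G_1}(y_1,w_1) + d_{G_2}(y_2,w_2) \ge 5$ together with $d_{G_i}(y_i, w_i) \ge 2$ from~(c) forces both $d_{G_i}(y_i, w_i) \ge 3$; each 2-simple $w$--$y$ path lies in some $G_i$ and satisfies~(d3) by that $G_i$'s gluability, exactly matching what $G$ requires. If $d_G(w, y) = 2$, the sum hypothesis forces, WLOG, $d_{G_1}(y_1, w_1) = 2$ and $d_{G_2}(y_2, w_2) \ge 3$. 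The paths on the $G_1$-side satisfy (d2.i) or (d2.ii) by $G_1$'s gluability, using that $w_1$ is connectable so that $G_1$'s locking exception is vacuous; a parallel check (the length-2 path from $w$ to $y$ in $G$ is the one from $G_1$, whose labels are unchanged) shows $w$ is not locked in $G$ either, so $G$'s locking exception is vacuous as well.

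The main obstacle I anticipate lies in the case $d_G(w, y) = 2$: the paths on the $G_2$-side have length at least three, so by $G_2$'s gluability they satisfy~(d3), whereas $G$'s condition~(d) demands that they satisfy (d2.i) or~(d2.ii). Bridging this gap is the most delicate point of the proof and is where the numerical thresholds $\nfrac{2}{3}$, $\nfrac{3}{4}$, $\nfrac{1}{2}$ in the definition of gluability are used most sharply; I would pin it down by a case analysis on which of (d2.i) or~(d2.ii) holds for the $G_1$-side, and then argue, using the specific imin configuration provided by~(d3) on the $G_2$-side together with the connectability of $w_2$, that the length-$\ge 3$ paths coming from $G_2$ also satisfy the required (d2)-type condition in $G$.
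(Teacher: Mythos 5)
Your overall architecture matches the paper's: the identified pair $\{y,w\}$ separates the two sides, so conditions (a)--(c) and condition (d) for type-2 vertices other than $w$ transfer verbatim, and goodness and decency of the combined labeling are checked by pairing up the (d2.i)/(d2.ii)/(d3) classifications of the two halves of any mixed cycle or path. Up to that point the proposal is sound (your observation that condition (c) prevents a doubled edge $yw$ is a nice detail the paper leaves implicit), although the pairing-up of classifications is where the paper does its real case-by-case work and where the distance hypothesis is used to exclude a $C_4$ whose two halves both satisfy (d2.ii); you only gesture at this part.

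The genuine gap is exactly where you place it, but your proposed repair cannot work. Suppose $d_{G_1}(y_1,w_1)=2$ and $d_{G_2}(y_2,w_2)\ge 3$, so that $d_G(y,w)=2$. A 2-simple $w$--$y$ path $P$ coming from the $G_2$ side has length at least three, so by $G_2$'s gluability it satisfies (d3): its edge at $y$ is a local minimum with label in $\lt[\nfrac23,\nfrac34\rt[$. But (d2.i) requires that edge to have label at least $\nfrac34$, and (d2.ii) requires it to lie in $\lt]0,\nfrac12\rt]$; both ranges are disjoint from $\lt[\nfrac23,\nfrac34\rt[$, and $P$ is too long to be the locking path (which has length exactly two). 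Hence no case analysis on the $G_1$ side and no appeal to connectability can make $P$ satisfy a (d2)-type condition --- the numerical thresholds make your final step impossible, not merely delicate. For comparison, the paper's own proof does not attempt this reconciliation: it observes that every 1- or 2-simple path from $y$ is contained in one $G_i$ and lets each path keep the classification it had on its own side, then spends its effort verifying that each resulting pairing yields the two local minima or imins required for goodness and for conditions (a) and (b) of decency. To complete your argument along the lines you set out, you would have to either show the configuration $d_{G_1}(y_1,w_1)=2$, $d_{G_2}(y_2,w_2)\ge3$ never occurs in the intended applications, or reinterpret condition (d) as classifying each path by its own length rather than by $d_G(w,y)$; as written, the step fails.
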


The condition on the distances, which means that identifying $y_1=y_2$ and $w_1=w_2$ cannot create a $C_4$ in~$G$, is needed because if the labels on two paths
satisfy the condition (d2.ii), then gluing them does not give a good edge-labeling.
The proof of Lemma~\ref{lem:glue:2-sum} can be found in the appendix.


The operation which adds a graph of the kind described in Example~\ref{exp:gluing:alphabetaCycle} differs from the above two.

Let~$(G_1,\tau_1,\phi_1,y_1)$ be a gluable graph, and $(y_1,u_1,v_1)$ a path in~$G$ as in Definition~\ref{def:gluable}(a).  Let~$H$ be a typed graph with
types~$\tau$, as described in Example~\ref{exp:gluing:alphabetaCycle}.  To specify the edge labeling of~$H$, we let $\alpha := \phi_1(y_1u_1)$ and $\beta :=
\phi_1(u_1v_1)$.  By Definition~\ref{def:gluable}(a), these values satisfy the conditions in Example~\ref{exp:gluing:alphabetaCycle} to define the decent
edge-labeling~$\phi_2$ of~$H$.
The proof of the following lemma is in the appendix.

\begin{lemma}\label{lem:glue:edge}
  The typed graph~$G'$ resulting from gluing $G$ and~$H$ along $\{y=y_1,u=u_1,v=v_1\}$ is gluable.
\end{lemma}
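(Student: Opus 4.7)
The plan is to verify the four gluability conditions (a)--(d) of Definition~\ref{def:gluable} for the glued quadruple $(G',\tau,\phi,y)$, where $\phi$ agrees with $\phi_1$ on $E(G_1)$ and with $\phi_2$ on $E(H)$; these are consistent on $yu$ and $uv$ because $\phi_2$ was specified in terms of $\alpha=\phi_1(yu)$ and $\beta=\phi_1(uv)$. The key structural fact I would exploit throughout is $V(G_1)\cap V(H) = \{y,u,v\}$, so any walk using edges from both sides must pass through one of these three vertices.

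For conditions~(a), (b), and (c), I would split each candidate path $P$ into three cases: entirely in $G_1$ (handled by the gluability of $G_1$), entirely in $H$ (handled by the explicit decent labeling of Example~\ref{exp:gluing:alphabetaCycle} together with the constraints on $\alpha, \beta$), or crossing through $\{y,u,v\}$. Since $u$ has type~$0$ in $H$ and $v$ has type~$0$ or~$1$ in both, the crossing paths are tightly restricted; in each subcase the admissibility inequalities $\nfrac23 < \alpha \le \nfrac34 < \beta$, together with the edge-label structure of $H$, supply the required bounds. Condition~(c) is preserved because the type-$2$ vertex $w^\star$ of~$H$ lies at cycle-distance at least two from $y$ by construction.

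The main obstacle is condition~(d), which must be verified for two kinds of type-$2$ vertices: those inherited from $G_1$, and $w^\star$ itself. For an inherited type-$2$ vertex $w$ of $G_1$, the $2$-simple paths from $w$ to $y$ that lie in $G_1$ satisfy~(d) by gluability of $G_1$; any new $2$-simple path in $G'$ must detour through $H$ via $u$ or $v$, and since these vertices have type at most~$1$ and the labels of $H$ are explicit, one checks that such detours continue to meet (d2) or (d3) by combining the final-edge label from $G_1$ with an imin supplied on the $H$-side. For $w^\star$, the two $2$-simple paths from $w^\star$ to $y$ staying inside $H$ satisfy~(d3) by construction of the labeling in Example~\ref{exp:gluing:alphabetaCycle}; any $2$-simple path that escapes into $G_1$ must exit through $u$ or $v$ and re-enter at $y$, so the edge incident to $y$ on such a path is a $G_1$-edge whose label is controlled by the gluability of $G_1$, while the $H$-side of the path furnishes the second imin demanded by~(d3).

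The hard part is the case analysis for (d): specifically, showing that detours through $H$ for $G_1$-side type-$2$ vertices, and detours into $G_1$ for $w^\star$, both interact correctly with the window $\nfrac23 < \alpha \le \nfrac34 < \beta$ so that the final-edge value and second-imin requirements of (d2.i), (d2.ii), and (d3) continue to hold after gluing. This verification is largely mechanical book-keeping rather than a conceptual leap, which is presumably why the authors defer the detailed argument to the appendix.
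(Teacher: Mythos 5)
There is a genuine gap: you verify only conditions (a)--(d) of Definition~\ref{def:gluable}, but ``gluable'' is defined on a quadruple whose third component is a \emph{decent} edge-labeling of the graph, and decency in turn presupposes that the labeling is \emph{good}. Neither of these is established in your proposal, and together they form the bulk of the paper's actual proof. The gluing here identifies three vertices $\{y,u,v\}$ spanning the two shared edges $yu$ and $uv$, so $G'$ contains genuinely new cycles: take any $y_1$--$u_1$ path in $G_1$ and close it up through the $H$-side of the cycle of Example~\ref{exp:gluing:alphabetaCycle}. Every such cycle must be shown to have two local minima. The paper does this by contracting the $H$-portion of such a cycle $C$ to the single edge $u_1y_1$, obtaining a cycle $C'$ in $G_1$, and then transferring the two local minima of $C'$ back to $C$; this requires a careful case analysis on whether a local minimum of $C'$ contains $u_1$, $y_1$, or the edge $u_1y_1$ itself, and uses the specific labels $-1$ and $\frac{\nfrac23+\alpha}{2}$ on the $H$-side to manufacture replacement minima when a minimum of $C'$ is destroyed by the substitution. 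Nothing in your proposal addresses cycles at all.

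Likewise, you do not verify properties (a) and (b) of Definition~\ref{def:typed-graph-decent} for the glued graph, i.e.\ the imin conditions on paths joining the new type-2 vertex $w^\star$ of $H$ to the type-2 vertices of $G_1$. These paths pass through $u$ or through $y$, and the paper handles them by invoking (b.1) on the $G_1$-side (when the path uses $u$) or the trichotomy (d2.i)/(d2.ii)/(d3) on the $G_1$-side (when it uses $y$), combined with the explicit $H$-labels. Your case analysis for condition (d) of Definition~\ref{def:gluable} overlaps with part of this, but condition (d) only concerns paths ending at the root $y$, whereas decency property (a) concerns paths between two type-2 vertices, which is a different and larger family. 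Until the goodness and decency of $\phi$ on $G'$ are proved, the conclusion that $(G',\tau,\phi,y)$ is gluable does not follow.
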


This is the only lemma that can create flags that are locked type-2 vertices.
The following lemma will give us the option to add sails that connect to these flags, the proof is analogous to that of Lemma~\ref{lem:glue:edge}.

Let~$(G,\tau_1,\phi_1,y_1)$ be a gluable graph, and $(y_1,u_1,w_1)$ a locking path in~$G$.  Let~$H$ be a typed graph with
types~$\tau$, as described in Example~\ref{exp:gluing:alphaCycle}.  To specify the edge labeling of~$H$, we let $\alpha := \phi_1(y_1u_1)$ and $\gamma :=
\phi_1(u_1w_1)$.  By Definition~\ref{def:lock}, these values satisfy the conditions in Example~\ref{exp:gluing:alphaCycle} to define the decent
edge labeling~$\phi_2$ of~$H$.

\begin{lemma}\label{lem:glue:edges}
  The typed graph~$G'$ resulting from gluing $G$ and~$H$ along $\{y=y_1,u=u_1,w=w_1\}$ is gluable.
\end{lemma}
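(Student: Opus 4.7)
The plan is to verify the four conditions of Definition~\ref{def:gluable} for $(G',\tau,\phi,y)$, following the approach of Lemma~\ref{lem:glue:edge}. The gluing is well-defined on $\{y,u,w\}$ because $\alpha$ and $\gamma$ are copied from $\phi_1$ to the corresponding $H$-edges, and the minimum-type rule preserves the types $\tau_1(y), \tau_1(u), \tau_1(w)$; in particular $w$ is still type-$2$, still at distance two from $y$, and still locked, with the same locking path $yuw$.

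First I would confirm that $\phi$ is a good edge-labeling. Cycles of $G'$ lying wholly in $G$ or wholly in $H$ are handled by the respective labelings, and a crossing cycle $C$ contains a shared subpath of the triple $\{y,u,w\}$; the two required local minima are obtained one from each part, the smallness $\gamma<0$ of the edge $uw$ being crucial whenever $uw\in E(C)$. Conditions~(a),~(b),~(c) are routine: any new path out of $y$ leaves through the $H$-edge with label $(\nfrac23+\alpha)/2\in(\nfrac23,\nfrac34)$, its $H$-neighbour has type $0$ or $1$, and the next $H$-edge carries $\pm 1$; combined with $\alpha\in(\nfrac23,\nfrac34)$, these numerical ingredients deliver the admissibility inequalities and the threshold condition on 1-simple paths. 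No $H$-neighbour of $y$ is type-$2$, giving~(c).

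The main step, and the principal obstacle, is condition~(d). The locking path $yuw$ is exempt from (d), so for $w$ itself I need only verify (d3) on the new 2-simple path $P$ of length at least three that travels around the other arc of the $H$-cycle. In Example~\ref{exp:gluing:alphaCycle}, the edge at $y$ on $P$ has label $(\nfrac23+\alpha)/2\in[\nfrac23,\nfrac34)$ and is a local minimum on $P$ because its neighbour on $P$ carries $+1$; the edge at $w$ on $P$ has label $\gamma<0$ and is an imin, supplying the required second imin between the first local minimum and $w$. For any other type-2 vertex $w'$ of $G$, a new 2-simple path from $w'$ to $y$ must enter $H$ via one of $\{y,u,w\}$; I would complete the proof by a short case analysis on the entry point, combining the (d)-condition inherited from $G$ on the $G$-portion with the explicit labels on the $H$-portion, in direct parallel with the proof of Lemma~\ref{lem:glue:edge}.
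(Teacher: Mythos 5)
The paper supplies no written proof of this lemma---it only asserts that the argument is ``analogous to that of Lemma~\ref{lem:glue:edge}''---so there is nothing to match your sketch against line by line; structurally you follow the right template (goodness of the combined labeling, then the conditions of Definition~\ref{def:gluable}). However, your treatment of condition~(d) for the vertex $w$ itself, which is the one genuinely new point, does not work as written. In $G'$ the vertex $w$ is still a type-2 vertex at distance \emph{two} from $y$ (the locking path $y,u,w$ survives the gluing with the same labels $\alpha$ and $\gamma$, so $w$ remains locked), and Definition~\ref{def:gluable}(d) therefore requires every non-locking 2-simple $w$--$y$ path to satisfy (d2.i) or (d2.ii); clause (d3) is available only when $d(w,y)\ge 3$. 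Your numerical verification of (d3) on the new arc $P$ is correct but beside the point, and $P$ satisfies neither of the applicable clauses: its edge at $y$ carries the label $\frac{\nfrac23+\alpha}{2}$, which lies in the interval $(\nfrac23,\nfrac{17}{24})$ because $\nfrac23<\alpha<\nfrac34$, hence is strictly below the threshold $\nfrac34$ demanded by (d2.i) and strictly above the interval $(0,\nfrac12]$ demanded by (d2.ii). To close this gap you must either argue that $w$ is never again used as a connectable vertex or as the target of a further locking path in the subsequent inductive gluing (so the failing clause is never invoked downstream), or reinterpret (d) as a condition keyed to the length of each path rather than to $d(w,y)$---and in the latter case re-examine the proofs of Lemmas \ref{lem:glue:1-sum} and~\ref{lem:glue:2-sum}, which pair off the clauses according to exactly that case distinction. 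As it stands this step is a genuine gap, not a routine verification.

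A second, smaller omission: gluability of $(G',\tau,\phi,y)$ presupposes that $\phi$ is a \emph{decent} labeling of $G'$ (the quadruple in Definition~\ref{def:gluable} has a decent labeling as its third entry), and the new arc creates new 2-simple paths between $w$ and the other type-2 vertices of $G$, routed through $y$, as well as new 1-simple paths ending in type-1 vertices. The proof of Lemma~\ref{lem:glue:edge} devotes its second half to precisely this check of properties (a) and~(b) of Definition~\ref{def:typed-graph-decent}; your sketch verifies goodness of $\phi$ but never decency. You should add the corresponding case analysis, e.g.\ using the imin provided by the edge labelled $\gamma<0$ at $w$ on the new arc together with the conditions (d2.i)/(d2.ii)/(d3) inherited on the $G$-portion of such paths.
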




\section{Non-existence of windmills}\label{sec:no-windmill}

In this section, we prove the following theorem mentioned in the introduction.

Again, in this section, $G$ is a critical graph of girth at least five.  Let~$W$ be a windmill in~$G$ with axis~$y$ and~$k$ sails, and denote by $\widebar W$ be the
subgraph of~$G$ induced by $W$ and all of its flags, regular or not.  We say that $\widebar W$ is the \textit{closure} of~$W$.  Define types $\bar\tau$ for~$\widebar
W$ as follows:
\begin{equation}\label{eq:def-tau-make-swell-subgraph}\tag{$*$}
  \bar\tau(v) =
  \begin{cases}
    2,                               &\text{if $v$ is a flag,}\\
    \deg_G(v) - \deg_{\widebar W}(v) &\text{otherwise.}
  \end{cases}
\end{equation}
We will prove the following.

\begin{lemma}\label{lem:nowindmill:main}
  The typed graph~$\widebar W$ with types $\bar\tau$ is decent, unless
  \begin{itemize}
  \item it contains an evil wheel, and there is no irregular flag;
  \item it contains an almost evil wheel, and there is an irregular flag.
  \end{itemize}
\end{lemma}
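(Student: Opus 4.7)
The plan is to proceed by induction on the structure of the flag graph $F(W,G)$, using the inductive construction of Lemma~\ref{lem:windmills:flag-graph-inductive} to build $\widebar W$ as a gluable typed quadruple rooted at $y$, with decent labelings provided by the examples of Section~\ref{sec:gluing} and assembled via Lemmas~\ref{lem:glue:1-sum}, \ref{lem:glue:2-sum}, \ref{lem:glue:edge}, and~\ref{lem:glue:edges}. We would start with the trivial gluable quadruple on $\{y\}$, where $\bar\tau(y)\in\{0,1\}$ is determined by $\deg_G(y)$, $k$, and whether an irregular flag is present; we then process the connected components of $F(W,G)$ one at a time, and finally attach the irregular flag (if any) at the very end.

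Each flag-graph primitive would be matched to a specific gluable typed graph. Basic element~S yields a sail with no tip flag (Example~\ref{exp:gluing:path-wo-flag}); S$_-$ yields a cycle through $y$ formed by two sails sharing a tip (Example~\ref{exp:gluing:degsail}); S$^+$ yields a sail ending in a flag at the tip (Example~\ref{exp:gluing:path-w-flag}); C2 yields either a cycle off a sail or a cycle containing $y$ (Example~\ref{exp:gluing:fish} or~\ref{exp:gluing:alphabetaCycle}), depending on whether the axis lies on the cycle; and C4 yields a wheel centered at $y$ whose anchors are the 3-vertices adjacent to $y$ on successive sails (Example~\ref{exp:gluing:wheels} when some segment is long, Example~\ref{exp:gluing:almost-evil-wheels} when the wheel is almost evil). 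The remaining construction rules are realized by 2-sum gluing at a connectable type-2 flag (Lemma~\ref{lem:glue:2-sum}) for rule~A, and by the sail-adding operations of Lemmas~\ref{lem:glue:edge} and~\ref{lem:glue:edges} for rule~B, where Lemma~\ref{lem:glue:edges} handles the case that the new flag becomes a locked type-2 vertex in the sense of Definition~\ref{def:lock}.

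The two exceptional cases in the statement arise precisely when the construction cannot be carried out. If a C4 component produces a wheel in which every segment has length exactly three and the axis has type~$1$ (i.e.\ $\deg_G(y)=k+1$ with no irregular flag), then the only available wheel labeling would be an evil wheel, which fails to be decent by Example~\ref{exp:gluing:evil-wheels}. If on the other hand such a component produces an almost evil wheel and an irregular flag is present, then even though the wheel alone is decent (Example~\ref{exp:gluing:almost-evil-wheels}), the irregular flag is a type-2 vertex adjacent to the axis; the paths of length three from this irregular flag through $y$ to each bogie of the wheel then violate condition~(a) of Definition~\ref{def:typed-graph-decent}, so no extension of the almost-evil-wheel labeling to the irregular flag is decent.

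The main obstacle will be organizing the case analysis so that every application of the construction rules to a flag-graph primitive preserves the gluability conditions of Definition~\ref{def:gluable}, especially the condition that no type-2 vertex is adjacent to the root $y$ and the locking-path distinction for type-2 vertices at distance two from $y$. One must verify, in particular, that along each C4 component the successive anchors are independent enough to avoid unwanted short paths, that C2 components attached by Lemma~\ref{lem:glue:edges} indeed produce locked flags with the correct local labels, and that the irregular flag can always be appended at the axis whenever no almost evil wheel has been constructed—so that the two exceptional configurations listed really are the only obstructions.
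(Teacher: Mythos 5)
Your plan follows the paper's proof essentially verbatim: the paper likewise first shows that the regular part $H$ is gluable by induction on the flag graph (matching the basic elements S, S$_-$, S$^+$, C2, C4 to the path, cycle, fish and wheel examples, and the construction rules U, A, B to Lemmas~\ref{lem:glue:1-sum}, \ref{lem:glue:2-sum}/\ref{lem:glue:edges} and~\ref{lem:glue:edge}), and then appends the irregular flag at the very end with an explicit labeling ($-10$ on $yw_0$, $+1$ on the remaining edges at $w_0$) followed by a direct decency check of the 2-simple paths through $w_0$. The only discrepancies are cosmetic: the paper invokes Lemma~\ref{lem:glue:edges} as part of construction rule~A (attaching a new sail to an already locked flag that was created earlier by rule~B via Lemma~\ref{lem:glue:edge}), not as part of rule~B itself, and the converse direction you sketch (non-decency in the two excluded configurations) is neither needed for the lemma as used nor proved in the paper.
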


We will prove this lemma below.  Disregarding the types, from this lemma, we can immediately derive the following main result.

\begin{theorem}\label{thm:no-windmill}
  For every windmill~$W$ in~$G$, the closure $\widebar W$ of~$W$ contains an induced subgraph as depicted in Fig.~\ref{fig:gluing:almost-evil-wheels}, i.e., an
  (almost or not) evil wheel.
\end{theorem}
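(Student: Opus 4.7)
The plan is to derive Theorem~\ref{thm:no-windmill} essentially as an immediate corollary of Lemma~\ref{lem:nowindmill:main}, via the swell-subgraph machinery encoded in Lemma~\ref{lem:fundamental}. All of the substantive work is packaged into that lemma; the task here is only to translate the dichotomy it produces into the claimed structural statement about $\widebar W$.

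I would proceed by contradiction: suppose $\widebar W$ contains no induced almost evil wheel and no induced evil wheel. Then Lemma~\ref{lem:nowindmill:main} implies that the typed graph $(\widebar W,\bar\tau)$, with types defined by~\eqref{eq:def-tau-make-swell-subgraph}, is decent. It remains to verify that $\widebar W$ is a swell subgraph of~$G$ in the sense of Definition~\ref{def:swell-graph-decent}, after which Lemma~\ref{lem:fundamental} supplies the contradiction with the criticality of~$G$.

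The three swell-subgraph conditions follow mechanically from~\eqref{eq:def-tau-make-swell-subgraph}. For (a), any type-0 vertex~$v$ satisfies $\deg_G(v)=\deg_{\widebar W}(v)$, so every neighbor of~$v$ lies in $\widebar W$. For (b), any type-1 vertex~$v$ satisfies $\deg_G(v)-\deg_{\widebar W}(v)=1$, so~$v$ has exactly one neighbor in $G-\widebar W$. For (c), note that flags are assigned type~$2$, so any type-1 vertex lies inside~$W$ itself; consequently a vertex $z\in G-\widebar W$ with two type-1 neighbors would have two neighbors in~$W$, making~$z$ a flag of~$W$ and hence already a vertex of~$\widebar W$, a contradiction. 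The required type-$0$ or type-$1$ vertex exists: the starting 2-vertex of any sail has $\deg_G=2$ and $\deg_{\widebar W}\le 2$, hence type $0$ or $1$. This establishes that $\widebar W$ is a swell subgraph of~$G$ and completes the derivation.

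The genuinely difficult step is not in this short reduction but in Lemma~\ref{lem:nowindmill:main} itself; the main obstacle there is constructing a decent labeling of~$\widebar W$ by inductively gluing together the elementary pieces of Examples~\ref{exp:gluing:path-w-flag}--\ref{exp:gluing:almost-evil-wheels} according to the flag-graph construction rules of Lemma~\ref{lem:windmills:flag-graph-inductive}, using the gluing toolkit of Lemmas~\ref{lem:glue:1-sum}--\ref{lem:glue:edges}, and handling the irregular flag (if present) as a separate initial ingredient. Evil and almost evil wheels appear precisely as the obstructions to carrying out this inductive gluing, which explains why the exceptional cases in the lemma exactly match the conclusion sought in the theorem.
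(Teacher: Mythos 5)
Your proposal is correct and follows exactly the paper's route: the paper also derives the theorem by observing that $\widebar W$ with types $\bar\tau$ is a swell subgraph of~$G$ and then combining Lemma~\ref{lem:nowindmill:main} with Lemma~\ref{lem:fundamental}. Your explicit verification of the three swell-subgraph conditions from~\eqref{eq:def-tau-make-swell-subgraph} is a detail the paper leaves implicit, but it is accurate.
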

\begin{proof}
  This follows from Lemma~\ref{lem:nowindmill:main} by noting that $\widebar W$ is a swell subgraph of~$G$, and invoking Lemma~\ref{lem:fundamental}.
\end{proof}

The proof of Lemma~\ref{lem:nowindmill:main} is performed in two steps.  We first prove that the ``regular'' part of~$H$ is gluable, and then add the irregular flag,
if existent.
For this, let~$H$ be the subgraph of~$G$ induced by $W$ and its regular flags, and define types $\tau$ for~$H$ as in~\eqref{eq:def-tau-make-swell-subgraph}.
We now prove the following.

\begin{lemma}\label{lem:nowindmill:regular-gluable}
  There exists an edge-labeling $\phi$ such that $(H,\tau,\phi,y)$ is gluable, unless it contains an evil wheel.
\end{lemma}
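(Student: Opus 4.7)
The natural approach is induction on the inductive construction of the flag graph $F = F(W,G)$ provided by Lemma~\ref{lem:windmills:flag-graph-inductive}. Each sail-vertex of~$F$ corresponds in~$H$ to the path from the axis~$y$ along a sail to its tip 2-vertex (or to the short cycle formed by a degenerate pair of sails); each flag-vertex corresponds to a regular flag; and arcs in~$F$ encode adjacencies between flags and vertices on sails. Under the degree-difference typing~\eqref{eq:def-tau-make-swell-subgraph}, each basic element and each stage of the inductive construction produces a typed subgraph of~$H$ whose structure matches one of the typed graphs from the examples in Section~\ref{sec:gluing}, with~$y$ as the root.

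For the base cases, each of the five basic elements S, S$_-$, S$^+$, C2, C4 is matched with the corresponding example: Example~\ref{exp:gluing:path-wo-flag} for S, Example~\ref{exp:gluing:degsail} for S$_-$, Example~\ref{exp:gluing:path-w-flag} for S$^+$, Examples~\ref{exp:gluing:fish} or~\ref{exp:gluing:alphabetaCycle} for C2 (depending on whether the flag's 3-vertex neighbor is adjacent to the axis), and Examples~\ref{exp:gluing:wheels} or~\ref{exp:gluing:almost-evil-wheels} for C4 (depending on the segment lengths of the wheel and the type of the center). The decent edge-labelings provided in those examples are checked mechanically against Definition~\ref{def:gluable} to verify gluability. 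The single base-case configuration for which no such labeling exists is when a C4 element yields an evil wheel (Example~\ref{exp:gluing:evil-wheels}), which is precisely the exception in the lemma.

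For the inductive step, each construction rule corresponds to a gluing operation against a newly-added basic element. Rule~U (fresh component) is a 1-sum along~$\{y\}$, handled by Lemma~\ref{lem:glue:1-sum}. Rule~A (new sail-vertex with a 2-arc to an existing flag-vertex) is the 2-sum of Lemma~\ref{lem:glue:2-sum} identifying~$y$ with itself and the existing flag~$w$ with the type-2 tip-adjacent flag of the new path piece; since girth~$\ge 5$ forces every sail ending at a flag to have length~$\ge 2$, the distance hypothesis $d_{G_1}(y_1,w_1)+d_{G_2}(y_2,w_2)\ge 5$ is automatic, leaving only the connectability of~$w$ to be checked. Rule~B (new flag plus new sail, attached by a 3-arc into an existing sail) is performed by Lemma~\ref{lem:glue:edge} along the admissible path~$(y,u,v)$ that Definition~\ref{def:gluable}(a) guarantees inside that sail, followed by a 1-sum (or by Lemma~\ref{lem:glue:edges} when the intermediate flag has become locked) to attach the new sail.

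The main obstacle I anticipate is the bookkeeping around \emph{locked} type-2 flags (Definition~\ref{def:lock}). Only Lemma~\ref{lem:glue:edge} can produce a locked flag, and Lemma~\ref{lem:glue:2-sum} forbids 2-summing at one; so whenever Rule~A would attach to a locked flag, we must route through Lemma~\ref{lem:glue:edges} instead. Verifying that the labeling parameters $\alpha,\beta,\gamma$ produced by successive edge-gluings stay inside the admissible ranges prescribed by Definitions~\ref{def:gluable} and~\ref{def:lock}, so that these workarounds always succeed, is the central technical task. Since Rules~A and~B cannot close cycles in~$F$, every cycle of~$F$ is already a C2 or C4 basic element; the only configuration in which the induction can genuinely fail is therefore the evil-wheel base case.
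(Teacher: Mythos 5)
Your proposal follows essentially the same route as the paper: induction over the flag-graph construction of Lemma~\ref{lem:windmills:flag-graph-inductive}, matching the basic elements to the gluable examples of Section~\ref{sec:gluing} and the construction rules U, A, B to Lemmas~\ref{lem:glue:1-sum}, \ref{lem:glue:2-sum}/\ref{lem:glue:edges}, and~\ref{lem:glue:edge} respectively, with the evil wheel as the only failing base case. Your pairing of S with Example~\ref{exp:gluing:path-wo-flag} and S$^+$ with Example~\ref{exp:gluing:path-w-flag} is the one actually dictated by the typing~\eqref{eq:def-tau-make-swell-subgraph} (the paper's text lists these two in the opposite order, which appears to be a slip), and your treatment of locked flags via Lemma~\ref{lem:glue:edges} coincides with the paper's.
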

\begin{proof}
  Suppose that~$H$ does not contain an evil wheel with axis~$y$.  Recall that this implies that~$H$ does not contain an evil wheel as a subgraph.
  
  We proove that~$H$ is gluable.  To do this, we associate to each of the basic elements (as laid down in Lemma~\ref{lem:windmills:flag-graph-inductive}) a gluable
  graph (one of the examples of the previous section); and to each of the construction rules, we associate one of the operations of Lemmas
  \ref{lem:glue:1-sum}--\ref{lem:glue:edge}.  By induction, this implies that~$H$ is gluable.

  \begin{enumerate}[$\bullet$]
  \item\textbf{Basic element~S} This corresponds to a typed path as in Example~\ref{exp:gluing:path-w-flag}.
  \item\textbf{Basic element~S$_-$} This corresponds to a cycle as in Example~\ref{exp:gluing:degsail}.
  \item\textbf{Basic element~S$^+$} This corresponds to a typed path as in Example~\ref{exp:gluing:path-wo-flag}.
  \item\textbf{Basic element~C2} This corresponds to a cycle in~$H$ as in Example~\ref{exp:gluing:fish}.  
  \item\textbf{Basic element~C4} This either corresponds to an almost evil wheel in~$H$, as in Example~\ref{exp:gluing:almost-evil-wheels}, or to a benign wheel, as
    in Example~\ref{exp:gluing:wheels}, because evil wheels are excluded.
  \end{enumerate}
  
  Suppose that the graph~$H'$ represented by a partial flag graph $F'$ is gluable.  We perform one of the construction rules to obtain an extended new flag
  graph~$F$, and explain how we use the gluing lemmas to extend~$H'$ to a gluable graph~$H$.
  
  \begin{enumerate}[$\bullet$]
  \item\textbf{Construction rule~U} This corresponds to taking a 1-sum as in Lemma~\ref{lem:glue:1-sum}.  The identification takes place at the axes of the
    components.
  \item\textbf{Construction rule~A} This corresponds to adding a path as in Example~\ref{exp:gluing:path-w-flag} or Example~\ref{exp:gluing:alphaCycle} via the 2-sum
    operation of Lemma~\ref{lem:glue:2-sum} or Lemma~\ref{lem:glue:edges}, respectively.  In the first case, the new sail-vertex from which the arc initiates
    corresponds to the path; the old flag-vertex which is the target of the arc identifies a flag~$w_1$ of~$H'$.  This flag~$w_1$ is identified with the vertex~$w$
    of the path.  The axis~$y_1$ is identified with the root vertex~$y_2$ of the path.  In the second case, the flag is the type-2 vertex $w$ in the bottom part of
    Fig.~\ref{fig:gluing:fishAndCycle}, with the bottom path connecting $y$ and~$w$ corresponding to the new sail-vertex.
  \item\textbf{Construction rule~B} This corresponds to adding a cycle as in Example~\ref{exp:gluing:alphabetaCycle} via Lemma~\ref{lem:glue:edge}.  The sail-vertex
    of~$F$ to which the new vertices are attached, identifies a sail (degenerate or not) in~$H$.  The two edges in this sail (or, on one path of the sail in the case
    when it is degenerate) which are closest to the axis~$y$ correspond to the two vertically drawn edges in the middle part of Fig.~\ref{fig:gluing:fishAndCycle},
    $yu$, $uv$.  The new flag-vertex is the type-2 vertex in that picture, and the new sail-vertex corresponds to the path between the root~$y$ and the type-0 vertex
    to the right of the type-2 vertex (the path which does not use the vertex~$u$).
  \end{enumerate}
\end{proof}

We point out the following property of the edge-labeling~$\phi$ constructed in this proof.
\begin{remark}\label{rem:property-of-sails-for-irreg}
  If~$W$ has an irregular flag~$w_0$, then on every sail~$P$ whose tip is adjacent to~$w_0$, the edge-labeling~$\phi$ for~$H$ has the labels shown in
  Fig.~\ref{fig:gluing:path-wo-flag}.
\end{remark}

In the second step, if necessary, we will need to add the irregular flag to~$H$.  This step will complete the proof of Lemma~\ref{lem:nowindmill:main}.

\begin{proof}[Proof of Lemma~\ref{lem:nowindmill:main}]
  If no irregular flag exists, this lemma is just a weaker form of Lemma~\ref{lem:nowindmill:regular-gluable}.  Suppose that an irregular flag in~$\widebar W$
  exists; denote it by~$w_0$.  We take the labeling~$\phi$ from Lemma~\ref{lem:nowindmill:regular-gluable}, and extend it to a decent labeling $\bar\phi$
  of~$\widebar W$.  To do this, let $y,x_1,\dots,x_r$ be the neighbors of~$w_0$ in~$\widebar W$.  We let $\bar\phi(e) := \phi(e)$ for all $e\in E(H)$; $\bar\phi(y
  w_0) := -10$; and $\bar\phi(w_0 x_j) := +1$, $j=1,\dots,r$.
  
  We now verify that the resulting labeling is decent, using the above Remark~\ref{rem:property-of-sails-for-irreg}.  Since there exists an irregular flag,
  by~\eqref{eq:def-tau-make-swell-subgraph}, we have $\tau(y) = 0$.  Since, in Definition~\ref{def:typed-graph-decent}, we only need to check 2-simple paths, the
  only paths we need to check are those starting or ending in~$w_0$.  Consider first paths staring in a type-2 vertex of~$H$ and ending in~$w_0$.  Such a path enters
  the sails whose tips are incident to~$w_0$ either through the axis or through a vertex adjacent to the axis.  In each of the two cases, the path touches a type-1
  vertex before it reaches~$w_0$.  By condition (b.1) of Definition~\ref{def:typed-graph-decent}, and using the fact that the edge with label~$-10$ on the path is an
  imin, we find that such a path has at least two imins, i.e., it satisfies condition (a.2).

  Secondly, consider a path~$P$ starting in~$w_0$ and ending in a type-1 vertex.  Since neither the axis nor the vertices adjacent to~$w_0$ are type-1 vertices, the edge with label~$-10$ is an imin not incident to the type-1 vertex in which~$P$ ends, and thus (b.1) is satisfied.

  We leave it to the reader to verify that~$\bar\phi$ is in fact a good edge-labeling of~$\widebar W$.
\end{proof}


\section{Proof of Theorem~\ref{thm:main}}\label{sec:proof-discharge}

Let $G$ be a minimum counter~example to Theorem~\ref{thm:main}, i.e., $G$ is a critical graph of girth at least~$5$ and with average degree less than~$3$.  Let
$\deg(v)$ denote the degree of vertex $v$.  To every vertex~$v$ assign a charge of $6-2\deg(v)$.  The total charge of the graph is $\sum_v (6-2\deg(v)) >0$, because
the average degree of~$G$ is less than~$3$.  Note that after the assignment of initial charges, only 2-vertices have positive charge.

Now we discharge the graph according to the following rule:
\begin{itemize}
\item 
  For every 2-vertex $u$ and every neighbour $v$ of $u$, if there are $k$ sails with tip $uv$, then $u$ sends $\frac{1}{k}$ charge (via these sails) to each
  $4^+$-vertex at the end of these $k$ sails.
\end{itemize} 

It can be seen that charges are sent from 2-vertices to $4^+$-vertices via paths consisting of only 3-vertices.  Now we show that after the discharging phase, every
vertex of the graph has nonpositive charge, a contradiction.  Indeed, let~$v$ be a vertex.  We consider the following cases.

\begin{enumerate}[(i)]
\item $v$ is a 2-vertex.  Then it has an initial charge of~2.  In the discharging, $v$ sends a total of 1 unit of charge out via each of the two tips, and $v$ does
  not receive any charge in the discharging phase.  So $v$ has 0 charge after the graph is discharged.
\item $v$ is a 3-vertex.  Then $v$ has an initial charge of 0.  Moreover, $v$ does not gain or lose any charge in the discharging phase.  So $v$ has 0 charge after
  the graph is discharged.
\item $v$ is a 4-vertex.  Then $v$ has an initial charge of $-2$.  To become positive, it must receive charges via at least three incident edges, implying that~$v$
  is the axis of a windmill.  (We note that this holds true even if two sails share a common tip and both end in~$v$.)  By Lemma~\ref{lem:nowindmill:main}, such a
  windmill must contain an evil or almost evil wheel as shown in Fig.~\ref{fig:gluing:evil-wheels} and Fig.~\ref{fig:gluing:almost-evil-wheels}.  It can be seen that
  in both cases, the axis of the windmill is at the same distance from the 2-vertices of the windmill as one of the flags.  Hence, vertex~$v$ receives at most
  $\frac{1}{2}$ via each sail of the wheel.  Thus, the charge of~$v$ after discharging is either at most $-2 + 3\cdot \frac{1}{2} = -\nfrac12$ or at most $-2 +
  2\cdot \nfrac12 + 1 = 0$.
\item $v$ is a 5-vertex.  Then $v$ has an initial charge of $-4$.  To become positive, it must receive charges via every incident edge, implying that~$v$ is the axis
  of a 5-windmill in~$G$.  Again, similar to the above case, Lemma~\ref{lem:nowindmill:main} implies that such a windmill contains an evil or almost evil wheel in
  both of which cases, the axis of the windmill is of the same distance from the 2-vertices of the windmill as one of the flags.  Hence, vertex~$v$ receives at most
  $\frac{1}{2}$ via each sail of the wheel.  Thus, the charge of~$v$ after discharging is either at most $-4 + 4\cdot 1 = 0$ or at most $-4 + k\cdot \frac{1}{2} +
  (5-k) \times 1\geq 0$ where $k\geq 2$.
  \item $v$ is a $6^+$-vertex. Then $v$ has an initial charge of $6-2 \deg(v)$.  Since $v$ receives at most 1 unit of charge via each incident edge, $v$ has at most
    $(6-2 \deg(v)) + deg(v) \leq 0$ charge after the graph is discharged.
\end{enumerate}



\section{Conclusion}

We have seen that imposing a lower bound on the girth facilitates the construction of good edge-labelings, or even decent edge-labelings.  In this paper, we
have used this approach together with a degree-bound.  It seems probable that high girth benefits other open problems about good edge-labeling.  For example,
Ara\'ujo et al.~\cite{AraujoCohenGiroireHavet11} conjecture that for every $c<4$, the number of (pairwise non-isomorphic) critical graphs with average degree at
most $c$ is finite.  We propose the following weakening of their conjecture.

\begin{conjecture}\label{conj:avg-deg-c}
  For every $c<4$, the number of (pairwise non-isomorphic) critical graphs with girth at least five and average degree at most $c$ is finite.
\end{conjecture}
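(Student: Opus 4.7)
The plan is to extend the discharging-plus-decent-subgraph machinery of this paper from the regime $c<3$ to the regime $c<4$. For a given $c<4$, I would choose constants $\alpha,\beta>0$ with $\alpha/\beta>c$ and such that $\alpha-\beta d$ is still strictly positive for $d\in\{2,3\}$ (for example $\alpha:=8$, $\beta:=2$, or other weights that treat 2- and 3-vertices differently). Assigning the charge $\alpha-\beta d(v)$ to each vertex~$v$, the total charge of a graph with average degree at most $c<\alpha/\beta$ is strictly positive, and now both 2- and 3-vertices contribute positively. Any discharging rule must therefore move positive charge from these low-degree vertices to $5^{+}$-vertices (in fact $\lceil\alpha/\beta\rceil^{+}$-vertices), via paths of vertices that have small nonnegative charge.

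First, I would generalize windmills so that their sails can emanate from 3-vertices as well as 2-vertices. The natural candidate is a structure with $k$ internally shortest 3-paths meeting at a common $4^{+}$-axis, each path starting in a vertex of degree~2 or~3 (with 3-vertices on the interior). The flag analysis of Section~\ref{sec:windmills}, in particular Lemmas \ref{lem:sails} through \ref{lem:windmills:no-x-edges}, would have to be redone with a refined signature alphabet that distinguishes charge-carrying sources of different types; but most of the casework should carry over, because girth at least five is still what forces the internal structure of a sail to be rigid.

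Next, I would enlarge the family of decent building blocks (Examples~\ref{exp:gluing:path-w-flag}--\ref{exp:gluing:evil-wheels}) to cover the new boundary interfaces that arise, and re-examine Lemmas~\ref{lem:glue:1-sum}--\ref{lem:glue:edges} so that the inductive flag-graph construction of Lemma~\ref{lem:windmills:flag-graph-inductive} still outputs a gluable, hence decent, typed graph. The output would be an analogue of Theorem~\ref{thm:no-windmill}: every generalized windmill of size at least some absolute constant $k_{0}=k_{0}(c)$ contains a decent swell subgraph, contradicting criticality by Lemma~\ref{lem:fundamental}. Combined with a discharging argument showing that any critical graph of order at least $N(c)$ must contain such a large windmill, this yields the bound $|V(G)|\le N(c)$ on any critical graph with girth at least five and average degree at most $c$, which is precisely finiteness.

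The hard part will be controlling the obstructions. Already in the $c<3$ regime, evil wheels must be excluded by hand because they are good but not decent, and the discharging argument at $c<3$ only barely survives their presence. For $c$ approaching~4, the list of ``evil-like'' configurations is likely to grow substantially: more sails, degenerate sails sharing tips, 3-vertex sources stacking with 2-vertex sources near the same axis, and long cycles of anchors all create candidate non-decent patterns. The main obstacle is therefore a complete classification of these bad patterns, showing either that each forces a bounded global size or that a refinement of the three-type system (perhaps with a larger type alphabet, or with typed \emph{edges} as well as typed vertices) is strong enough to render them decent after all. A plausible route is to iterate the gluing framework so that obstructions at one level are absorbed into decent building blocks at the next, combined with a finiteness lemma of Ramsey type ensuring that only boundedly many ``minimal'' obstructions occur before a decent swell subgraph is forced to appear.
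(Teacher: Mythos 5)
This statement is a \emph{conjecture} in the paper, not a theorem: the authors explicitly pose it as an open problem and prove only the case $c=3$ (Theorem~\ref{thm:main}, which is a non-existence statement rather than a finiteness statement). So there is no proof in the paper to compare yours against, and what you have written is a research programme rather than a proof --- you yourself defer the essential content (``a complete classification of these bad patterns,'' a ``finiteness lemma of Ramsey type'') to future work. As it stands the proposal does not establish the conjecture.

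Beyond that, there is a concrete structural obstacle to the plan. For $3\le c<4$ a critical graph of average degree at most~$c$ may be $3$-regular, or may have arbitrarily few vertices of degree other than~$3$. With your charges $\alpha-\beta d(v)$ and $\alpha/\beta>3$, every $3$-vertex is a source of positive charge, yet there need not exist any $4^{+}$-vertex to serve as a sink: in a cubic graph of girth five the total charge is positive and there is nowhere to send it. The entire windmill/sail architecture of the paper rests on the asymmetry that $2$-vertices are sources, $3$-vertices are neutral conduits (the interiors of sails), and $4^{+}$-vertices are sinks; once $3$-vertices carry positive charge that asymmetry is gone, sails in the paper's sense need not exist at all, and Lemmas \ref{lem:232-path}, \ref{lem:sails}, and the flag analysis have no analogue. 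A second, independent gap: even where the machinery applies, excluding windmills yields a contradiction (hence \emph{no} critical graph), which is the wrong shape of conclusion for finiteness; to prove the conjecture one must bound $\lvert V(G)\rvert$ by some $N(c)$, and nothing in the proposal produces such a bound --- the claimed ``discharging argument showing that any critical graph of order at least $N(c)$ must contain such a large windmill'' is asserted, not derived, and is false as stated in the $3$-regular case above.
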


This paper settles the case $c=3$.  For $c=3$, but without restriction to the girth, a modification of Conjecture~\ref{conj:main} proposes itself naturally:

\begin{conjecture}[Ara\'ujo-Cohen-Giroire-Havet/modified]
  There is no critical graph with average degree less than~3, with the exception of~$C_3$, $K_{2,3}$, and the graph displayed in
  Fig.~\ref{fig:girth-4-counterexp}.
\end{conjecture}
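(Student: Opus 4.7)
The plan is to split the argument by girth. For girth three, any critical graph must equal~$C_3$: if~$G$ contains a triangle~$T$ as a proper subgraph, then by criticality~$T$ would admit a good edge-labeling, but $T=C_3$ is bad, a contradiction. For girth at least five, Theorem~\ref{thm:main} of this paper yields the conclusion directly. Therefore all effort concentrates on girth exactly four, where the goal is to show that only~$K_{2,3}$ and the 9-vertex exception of Fig.~\ref{fig:girth-4-counterexp} can arise.

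For girth four I would retool the windmill machinery of Section~\ref{sec:windmills} so that it accommodates 4-cycles: sails become allowed to have length two, two sails may meet along a common 4-cycle rather than only through a shared flag, and the flag-signature analysis of Lemmas~\ref{lem:simple-signatures} through~\ref{lem:windmills:no-x-edges} must be redone to reflect the additional local configurations. The decent/gluable framework of Section~\ref{sec:gluing} is essentially girth-independent, so its reuse requires only checking that the existing gluing lemmas still apply in the new context and supplying a few new basic decent typed graphs to cover 4-cycle-containing pieces. With these in place, a girth-4 analog of Theorem~\ref{thm:no-windmill} should follow by the same inductive flag-graph construction. The discharging argument of Section~\ref{sec:proof-discharge} is then rebalanced: a $K_{2,3}$ configuration is precisely the one in which two 2-vertices split charge via two length-two sails to a shared axis, so the arithmetic must be tightened until the only positive-charge survivors are exactly $K_{2,3}$ and the 9-vertex graph.

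The main obstacle I expect to hit is the combinatorial explosion of the girth-4 signature analysis. In girth at least five, the lemmas of Section~\ref{sec:windmills} kill most local configurations by a short length-minimality argument; in girth four, each such argument splinters into several subcases because a 4-cycle provides an alternative short path. Some of these subcases would likely have to be verified by a computer search over small critical girth-4 graphs, which would then serve as the base of the induction. A secondary concern is that the 9-vertex exception could itself serve as a building block of a larger critical graph: one would have to show it cannot be glued into such a graph along a 4-cycle, which would require a nontrivial strengthening of the gluing lemmas, in particular of the ``locked type-2 vertex'' handling in Lemma~\ref{lem:glue:edges}.
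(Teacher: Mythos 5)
The statement you are trying to prove is a conjecture, not a theorem of this paper: the paper establishes only the girth-at-least-five case (Theorem~\ref{thm:main}) and exhibits the girth-four exception of Fig.~\ref{fig:girth-4-counterexp}; the modified conjecture itself is left open. Your reductions for girth three (a critical graph properly containing a triangle would give that triangle a good edge-labeling, contradicting the badness of $C_3$) and for girth at least five (Theorem~\ref{thm:main}) are correct. But the entire content of the statement is the girth-four case, and what you offer there is a research program with explicitly acknowledged gaps, not an argument. No proof is given.

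Concretely, the gap is not merely that details are omitted; the steps you defer are exactly where the known machinery fails. First, the length-minimality arguments of Section~\ref{sec:windmills} use girth at least five essentially: in Lemma~\ref{lem:sails}(b) the well-definedness of $w_1$ and the strict inequality $\sabs{Q_2}<\sabs{Q_1}$ both invoke the absence of $C_4$'s, and Remark~\ref{rem:closest-non3-vtx} and the flag-signature lemmas repeatedly exclude configurations by producing a $C_3$ or $C_4$. You assert these can be ``redone'' but supply no replacement, and the existence of the critical graph in Fig.~\ref{fig:girth-4-counterexp} shows that some girth-four analog of Theorem~\ref{thm:no-windmill} must be false as stated, so it is not clear what the correct substitute statement even is. Second, your claim that the gluing framework of Section~\ref{sec:gluing} is ``essentially girth-independent'' is contradicted by the paper itself: Lemma~\ref{lem:glue:2-sum} requires $d_{G_1}(y_1,w_1)+d_{G_2}(y_2,w_2)\ge 5$ precisely because identifying the roots and the type-2 vertices must not create a $C_4$ (two paths both satisfying (d2.ii) glue to a cycle with only one local minimum). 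Allowing sails of length two therefore breaks the 2-sum operation at its core. Third, you concede that the case analysis would ``likely have to be verified by a computer search'' and that excluding the 9-vertex graph as a building block ``would require a nontrivial strengthening'' of Lemma~\ref{lem:glue:edges}; neither is carried out. As it stands, the proposal identifies the right obstacles but does not overcome any of them.
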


\subsection*{Acknowledgments}

We would like to thank the anonymous referees for their careful work.


\providecommand{\bysame}{\leavevmode\hbox to3em{\hrulefill}\thinspace}
\providecommand{\MR}{\relax\ifhmode\unskip\space\fi MR }
\providecommand{\MRhref}[2]{%
  \href{http://www.ams.org/mathscinet-getitem?mr=#1}{#2}
}
\providecommand{\href}[2]{#2}


\appendix
\section{Deferred proofs}\label{apx:glue}

\begin{proof}[Proof of Lemma~\ref{lem:glue:1-sum}]
  The conditions of Definition~\ref{def:gluable} are satisfied, since they require to check paths originating from the root vertex~$y$ only.
  Moreover, $\phi$ is a good edge-labeling.
  It remains to show that~$\phi$ is decent.
  
  To verify property~(a) of Definition~\ref{def:swell-graph-decent}, let~$Q$ be a 2-simple path between two \mbox{type-2} vertices~$w_1 \in V(G_1)\setminus V(G_2)$
  and~$w_2 \in V(G_2)\setminus V(G_1)$.  Let~$P_1 := w_1Qy$ and $P_2 := yQw_2$.
  If both $Q_1$ and~$Q_2$ satisfy (d2.i) then the edges incident to~$y$ in neither $Q_1$ nor $Q_2$ are part of the respective imins, and (a.2) of
  Definition~\ref{def:swell-graph-decent} holds.
  If $Q_1$ satisfies (d2.i) and $Q_2$ satisfies (d2.ii) then (a.2) of Definition~\ref{def:swell-graph-decent} holds: one of the imins is the imin of~$Q_1$, the other
  is the edge of $Q_2$ incident on~$y$.
  If both $Q_1$ and~$Q_2$ satisfy (d2.ii) then (a.1) holds, the imin there being the path of length two consisting of the two edges of~$Q$ incident on~$y$.
  If $Q_1$ satisfies (d2.i) and $Q_2$ satisfies (d3), then (a.2) holds for~$P$.
  If $Q_1$ satisfies (d2.ii) and $Q_2$ satisfies (d3), then (a.2) holds for~$P$.
  If both $Q_1$ and~$Q_2$ satisfy (d3), then (a.2) holds for~$P$.
  
  To verify property~(b), let~$Q$ be a 1-simple path between a type-1 vertex $w_1 \in V(G_1)\setminus V(G_2)$ and a type-2 vertex~$w_2 \in V(G_2)\setminus V(G_1)$.
  Note that the property in~(b) of Definition~\ref{def:gluable} holds for~$Q_1$.
  If (d2.i) holds for~$Q_2$, then the imin of~$Q_2$ is an imin of~$P$ not incident on~$w_1$.
  If (d2.ii) holds for~$Q_2$, then the edge incident on~$y$ in~$Q_2$ is an imin of~$P$ not incident on~$w_1$.
  If (d3) holds for~$Q_2$, then the imin of~$Q_2$ closer to~$w_2$ is an imin of~$P$ not incident on~$w_1$.
\end{proof}

\begin{proof}[Proof of Lemma~\ref{lem:glue:2-sum}]
  Denote the vertex of~$G$ resulting from identifying $y_1$ and~$y_2$ by~$y$, and the one resulting from identifying $w_1$ and~$w_2$ by~$w$.

  Let us first check Definition~\ref{def:gluable}(a--d).
  Property Definition~\ref{def:gluable}(a) is satisfied because no new path of this kind is added.  The conditions of Definition~\ref{def:gluable}(b--d) are
  satisfied, since they require to check 1- and 2-simple paths originating from the root vertex~$y$ only: these paths cannot contain~$w$, and are thus contained
  entirely in either $G_1$ or~$G_2$.

  We have to make sure that~$\phi$ is good, and that it satisfies the conditions (a) and~(b) of Definition~\ref{def:swell-graph-decent}.  We may assume w.l.o.g.\
  that $d_{G_2}(y_2,w_2) \ge 3$.

  We first prove that~$\phi$ is good.  For this, let~$Q_1$ be a path in~$G_1$ between $y$ and~$w_1$, and let~$Q_2$ be a path in~$G_2$ between $y$ and~$w_2$.  We have
  to verify that the cycle $C := Q_1 + Q_2$ has two local minima.
  
  If (d2.i) holds for~$Q_1$ and (d3) for~$Q_2$, then $C$ has two local minima.
  If (d2.ii) holds for~$Q_1$ and (d3) for~$Q_2$, then $C$ has two local minima.
  If both $Q_1$ and~$Q_2$ satisfy (d3), then $C$ has two local minima.
  
  Secondly, we prove properties (a) and~(b) of Definition~\ref{def:swell-graph-decent} hold.
  Note that for both these properties, we do not need to consider paths containing~$w$ as an interior vertex, because those are not 2-simple (in the case of (a)) or
  even 1-simple (for (b)).
  But this leaves us with the same situation which we have checked in the previous lemma.
\end{proof}

\begin{proof}[Proof of Lemma~\ref{lem:glue:edge}]
  We start by proving that~$\phi$ is a good edge-labeling.  For this, let~$C$ be a cycle in~$G$ containing edges of both $E(G_1)\setminus E(H)$ and $E(H)\setminus
  E(G_1)$.  Such a cycle can be turned into a cycle $C'$ in~$G_1$ by replacint the path of~$C$ in $E(H)\setminus E(G_1)$ by the single edge~$u_1y_1$.  We show that
  the fact that there are two local minima $Q_1$, $Q_2$, on~$C'$ implies that there are two local minima on~$C$.
  
  Obviously, if any of the local minima of~$C'$ contains neither $u_1$ nor~$y_1$, then it is a local minimum of~$C$.
  On the other extreme, if one of the two, say~$Q_2$, contains the edge~$u_1y_1$, then $Q_1$ and the path $P_{-1}$ formed by the two edges with label $-1$
  in~$C\setminus E(G_1)$ are two distinct local minima, because $-1 < \alpha$.
  Thus, we have to make sure that if any of the local minima of~$C'$ contains exactly one of the vertices $u_1$ or~$y_1$, then it can be modified to be a local
  minimum of~$C$.  Firstly, suppose $Q_1$ has value $\mu_1$ and contains~$u_1$ but not~$y_1$.  If $\mu_1 < -1$, then $Q_1$ is a local minimum of~$C$; if~$\mu_1 > -1$
  then $P_{-1}$ is a local minimum of~$C$; if the two are equal, then $Q_1+P_{-1}$ is a local minimum of~$C$.
  Secondly, suppose $Q_2$ has value $\mu_2$ and contains~$y_1$ but not~$u_1$.  If $\mu_2 < \frac{\nfrac23+\alpha}{2}$, then~$Q_2$ is a local minimum of~$C$; if
  $\mu_2 > \frac{\nfrac23+\alpha}{2}$, then the path $\dot P$ formed by the edge of $C\setminus E(G_1)$ with label $\frac{\nfrac23+\alpha}{2}$ is a local minimum
  of~$C$; if the two are equal, then $Q_2 + \dot P$ is a local minimum of~$C$.
  
  Next, we have to show that the edge-labeling~$\phi$ satisfies the properties (a) and~(b) of Definition~\ref{def:typed-graph-decent}.
  For propery~(a), let $w$ be the type-2 vertex of~$H$, let~$w_1$ be any type-2 vertex of~$G_1$, and let~$P$ be a $w$-$w_1$-path in~$G$.  On the one hand, if~$u=u_1$
  is on~$P$, then by Definition~\ref{def:typed-graph-decent}(b.1) applied to~$P(w_1,u_1)$, $Q$ has one imin not incident on~$u$, and the edge of~$Q$ incident to~$w$
  is a second, distinct, imin.  On the other hand, if~$y=y_1$ is on~$P$, we use (d2.i), (d2.ii), or~(d3) of Definition~\ref{def:gluable} for the path
  $Q':=Q(y_1,w_1)$.  Indeed, 
  if~$Q'$ satisfies~(d2.i) the length of $Q(w,y)$ is two (i.e., it contains the edge~$uy$),  then~$Q$ has two imins;
  if~$Q'$ satisfies~(d2.i) the length of $Q(w,y)$ is at least three, then~$Q$ has two imins;
  if~$Q'$ satisfies~(d2.ii) the length of $Q(w,y)$ is two, then~$Q$ has two imins;
  if~$Q'$ satisfies~(d2.ii) the length of $Q(w,y)$ is three, then~$Q$ has two imins;
  if~$Q'$ satisfies~(d3) the length of $Q(w,y)$ is two, then ~$Q$ has two imins;
  if~$Q'$ satisfies~(d3) the length of $Q(w,y)$ is at least three, then ~$Q$ has two imins.

  Finally, to check the conditions of Definition~\ref{def:gluable}, the only kind of paths which are added beyond those which were present in $G_1$ and~$H$ are those
  which result from taking a path $Q_1$ in~$G_1$ from~$y_1$ to $u_1=u$, and adding the edge $uw$ of~$H$.  Invoking the condition~(b.1) of
  Definition~\ref{def:swell-graph-decent}, $Q_1+uw$ contains two imins: one on~$Q_1$ and the other being the edge~$uw$.
\end{proof}


\end{document}